\let\geq=\geqslant
\let\leq=\leqslant
\newtheorem{thm}{Theorem}[section]
\newtheorem{lm}[thm]{Lemma}
\newtheorem{pro}[thm]{Proposition}
\newtheorem{df}{Definition}[section]
\numberwithin{equation}{section}
\newtheorem*{remark}{Remark}
\renewcommand\Re{\operatorname{Re}}
\newcommand{\RR}{\mathbb{R}}
\begin{document}


\title{A stiffly stable semi-discrete scheme for the damped wave equation on the half-line using SBP and SAT techniques}

\author{Thi Hoai Thuong NGUYEN}
\affil{Faculty of Mathematics and Computer Science, University of Science, 227, Nguyen Van Cu, 700000, Ho Chi Minh City, Vietnam}
\affil{Vietnam National University, 700000, Ho Chi Minh City, Vietnam.}
%
\author{Benjamin BOUTIN}
\affil{IRMAR (UMR CNRS 6625), Universit\'e de Rennes, Campus de Beaulieu, 35042 Rennes Cedex, France.}

\date{\today}

%


\maketitle

\begin{abstract}
	This paper investigates the stability of both the semi-discrete and the implicit central scheme for the linear damped wave equation on the half-line, where the spatial boundary is characteristic for the limiting equation. The proposed schemes incorporate a discrete boundary condition designed to guarantee the uniform stability of the IBVP, regardless of the stiffness of the source term or the spatial step size. Stability estimates for the semi-discrete scheme are established using the summation-by-parts (SBP) and simultaneous-approximation-term (SAT) penalty techniques, building on the continuous framework analyzed by Xin and Xu (2000) \cite{XinXu00}.
\end{abstract}


\section{Introduction}

\subsection{Context and motivation}

Hyperbolic systems of partial differential equations with relaxation terms are relevant in a variety of physical applications. Reactive flows \cite{ColellaMajdaRoytburd86}, water waves \cite{Whitham74,Stoker92}, and relaxing gas theory \cite{Clarke78} are examples of such systems. Following the research of Liu \cite{Liu87}, Hanouzet and Natalini \cite{HanouzetNatalini03}, and Yong \cite{Yong99, Yong04}, the investigation of the zero relaxation limit for such systems has attracted significant interest from both a theoretical and numerical perspective.

We are concerned with the numerical treatment of boundaries for hyperbolic relaxation systems. The simplest linear hyperbolic system with relaxation is the following linear damped wave equation:
\begin{subequations}
	\label{Problem}
	\begin{align}\label{I_1}
	\begin{split}
		\begin{cases}
			\partial_t u^\varepsilon +\partial_x v^\varepsilon =0,
			\\
			\partial_t v^\varepsilon +a\partial_x u^\varepsilon = -\varepsilon^{-1}v^\varepsilon,
		\end{cases}
	\end{split}
\end{align}
where $u^\varepsilon, v^\varepsilon\in\mathbb{R}$ and $a>0$.
The relaxation parameter $\varepsilon>0$  corresponds to the typical time in the process of return to the equilibrium.
Actually, the first order (in $\varepsilon$) equilibrium system is $v=0$ together with the stationary evolution equation $\partial_t u = 0$ driving the remaining physical quantity $u$. In this respect, the equilibrium limit is doubtlessly of limited interest. Let us highlight that we are more interested in the limiting process that in the limit itself. The system can be understood as an archetype of more general situations that may involve more complex equilibrium evolution processes together with, as we will discuss afterwards, characteristic boundaries in the limiting model. The subject concerns the initial boundary value problem (IBVP) for \eqref{I_1} in the quarter plane~$x>0,\ t>0$. This problem is supplemented with some initial data at time~$t=0$:
\begin{align}\label{I_2}
	u^\varepsilon(x,0)=u_0(x),\quad
	v^\varepsilon(x,0)=v_0(x).
\end{align}
The \textbf{hyperbolic structure} of the first order terms in the left-hand side of~\eqref{I_1} requires a condition on the solution at the spatial boundary $x=0$. We assume the boundary condition to be linear and of the form
\begin{align}\label{I_3}
	B_uu^\varepsilon(0,t) + B_vv^\varepsilon(0,t)=b(t),
\end{align}
\end{subequations}%
\color{black}%
\stepcounter{equation}%
where $B_u$ and $B_v$ are given real constants and the function $b$ is some boundary data. For a given general hyperbolic problem, no specific boundary condition is inherently preferable for defining a well-posed IBVP, except that the boundary condition must satisfy some structural assumptions. In fact, the parameters $(B_u,B_v)$ and data $b(t)$ in \eqref{I_3} are usually determined from physical modelling considerations. They should satisfy certain algebraic conditions which are recalled hereafter.
The problem \eqref{I_1} represents a particular simple instance of the Jin-Xin relaxation model in one spatial dimension \cite{JinXin95}. Its hyperbolic structure is associated with the Riemann invariants $\sqrt{a}u^\varepsilon\pm v^\varepsilon$ and the corresponding characteristic velocities $\pm \sqrt{a}$. For these quantities to be solvable at the left boundary $x=0$, the parameters in condition~\eqref{I_3} must satisfy the (Uniform) Kreiss Condition (\textbf{UKC})
\begin{align}\label{UKC}
	B_u+\sqrt{a}B_v\neq 0.
\end{align}
Under this assumption, the incoming flow $\sqrt{a}u^\varepsilon+v^\varepsilon$ at $x=0$ can be directly determined from the outgoing flow $(\sqrt{a}u^\varepsilon-v^\varepsilon)$ at $x=0$ and the data $b(t)$. The condition~\eqref{UKC} is a well-known necessary and sufficient criterion for the IBVP \eqref{Problem} to be well-posed for any fixed~$\varepsilon$ \cite{Benzoni-GavageSerre07}. Let us note that this criterion admits natural generalizations when handling with higher dimensions hyperbolic systems set over the multidimensional half-space $x\in\RR_+\times\RR^{d-1}$. In the present work, we focus on the simplest one-dimensional $2\times 2$ system \eqref{Problem}.\\
For consistency purposes, one can assume that the initial data $f(x)=\left(u_0(x), v_0(x)\right)$ and the boundary data $b(t)$ are compatible at the space-time corner $(x,t)=(0,0)$, for example in the following sense
\begin{align}
	f(0)=f'(0)=0,\quad
	b(0)=b'(0)=0.
\end{align}
However, this assumption is here not mandatory since we are only concerned with stability features.

\medskip

As previously discussed, when the parameter $\varepsilon$ goes to zero in~\eqref{I_1}, this system faces the \textbf{relaxation limit}. The stability of the relaxation process towards the equilibrium system in the absence of space boundary is known to be available under the Whitham subcharacteristic  condition \cite{Liu87}. For~\eqref{I_1}, this condition simply corresponds to the inequality $a>0$. In the presence of a boundary condition as~\eqref{I_3}, the situation is more tricky and the rigorous derivation of asymptotic behaviour for $u^\varepsilon$ and $v^\varepsilon$ is the key challenge from now more than two decades. Yong \cite{Yong99} addressed this problem for general multi-dimensional linear constant coefficient relaxation systems, or one-dimensional nonlinear systems, first with non-characteristic boundaries. He derived the so-called Generalized Kreiss Condition (GKC), which enables uniform stability estimates and the derivation of a reduced boundary condition for the limiting relaxed equilibrium system. 
For the particular boundary value Jin-Xin system \eqref{Problem}, with stiff source terms having the slightly more general form $\varepsilon^{-1}\left(\lambda u^\varepsilon -v^\varepsilon\right)$ where $\lambda\in\mathbb{R}$, the equilibrium system consists in the advection equation $\partial_t u + \lambda \partial_x u =0$. Xin and Xu \cite{XinXu00} identify and rigorously justify a necessary and sufficient condition on the boundary parameters $(B_u,B_v)$ that guarantees the uniform well-posedness of the corresponding IBVP, independently of the relaxation parameter $\varepsilon$. This condition is called the Stiff Kreiss Condition (SKC) and reads as a uniform lower bound satisfied by a parameterized determinant function. Hopefully, through the normal mode analysis and then a conformal mapping theorem, the abstract form of the SKC can be simplified to an explicit algebraic condition in terms of the coefficients $(B_u,B_v)$. Namely the \textbf{SKC} (or GKC) simply reduces, for the problem~\eqref{Problem}, to the following explicit condition:
\begin{align*}\label{SKC1}
	B_v=0\quad\text{or}\quad\dfrac{B_u}{B_v}\notin\left[-\sqrt{a},0\right].
\end{align*}
All along the present paper, without loss of generality by multiplying the boundary equation~\eqref{I_3} by -1, we assume that $B_u > 0$. 
The above condition then simplifies to
\begin{equation}\label{SKC}
	B_u+\sqrt{a}B_v>0,
\end{equation}
which can now be viewed as a subset of the UKC~\eqref{UKC}.
In addition to the work of Yong \cite{Yong99}, the study in~\cite{XinXu00} not only obtains the above condition but also addresses the asymptotic expansions for the limiting unknowns $u^\varepsilon$ and $v^\varepsilon$. These expansions involve both boundary and/or initial layers in the appropriate scaling of the time and space variables. For the IBVP~\eqref{Problem}, the boundary becomes characteristic in the limit and thus enters the framework of characteristic boundaries of type II in the denomination of the more recent works of Zhou and Yong~\cite{YongZhou21,ZhouYong21,ZhouYong22}. In these works, a three-scale expansion involving the space variable $(x,x/\sqrt{\varepsilon},x/\varepsilon)$ is used to fully describe the asymptotic boundary-layer behaviors in general multidimensional linear hyperbolic relaxation systems. The scale $x/\sqrt{\varepsilon}$ is precisely required when the boundary is characteristic for the equilibrium system. In that case, there is no reduced boundary condition for the limiting equilibrium system. In terms of possible applications, we refer the reader for example to \cite{CaoYong22} for the construction of boundary conditions for Jin-Xin models, to \cite{ZhaoHuangYong19} and \cite{ZhaoYong21} for boundary conditions for kinetic-based models, and to \cite{CoquelJinLiuEtAl14} and  \cite{HuangLiZhou23} for the use of relaxation models with discontinuous relaxation rate in coupling strategies. 
\medskip

The \textbf{motivation of the present study} is to analyze the counterpart of the stiff stability condition~\eqref{SKC}, but now for difference approximations of the IBVP~\eqref{Problem}. 
Naturally, any numerical approximation comes with its own stability features with respect to the time and space steps, but our interest again focuses more on the stiff stability with respect to the parameter $\varepsilon$. The reason for that is the effectiveness of approximation schemes is directly tied to the design of suitable discrete boundary conditions that ensure  stability estimate that are robust to cross along the convergence analysis, independently of $\varepsilon$. The next related step is to select, within the set of uniformly stable discrete boundary conditions, those that can minimize the size and impact of the possible artificial discrete boundary layers, while leading to (high order) accurate results. The next step should also be based not only on the choice for the discrete structure at the boundary but also on the choice of appropriate high order approximations on the discrete boundary data.

Let us now present the work done concerning this first stability feature in the previous paper \cite{BoutinNguyenSeguin20}. The (semi-)discrete boundary condition for the same model problem was implemented through the discrete version of~\eqref{I_3} supplemented with an other scalar evolution equation. This requirement of an artificial “incoming" quantity comes from the enlarged spatial stencil, both being absent in the PDE model. As a consequence, new discrete instabilities may emerge \cite{Trefethen84} in the computations. 
By using the summation-by-parts method from \cite{KreissScherer74, Strand94}, the homogeneous problem ($b=0$) is proved to satisfy natural energy estimates. These are estimates in terms of the initial data.
On the one side, these estimates are uniform in the parameters $(\Delta x,\varepsilon)$ for the case where $B_v>0$ (actually this case reads $B_uB_v>0$ in~\cite{BoutinNguyenSeguin20} since we did not assume $B_u>0$). This corresponds also to the case considered in 
\cite{LiuYong01}.
On the other side, in the case $B_v<0$, a restriction on the parameter $\Delta x/\varepsilon$ is necessary to guarantee the uniformity of the available estimate. This strict dissipativity condition can be reformulated as 
\begin{align}\label{DSDC}
	2aB_v+\tfrac{\Delta x}{\varepsilon}B_u>0.
\end{align}
In particular, the above condition precludes the possibility of having $\Delta x$ going to zero for a fixed $\varepsilon>0$. The second part of the previous work is concerned with the non-homogenous case $b\neq 0$. Using the Laplace transform and the normal mode analysis, the proposed semi-discrete approximation for \eqref{Problem} is then proved to be stiffly stable for $B_v>0$, meaning with full estimates in terms of the initial data $f$ and the boundary data $b$. The case $B_v>0$ is only a proper subset of the SKC~\eqref{SKC} and additional numerical evidences strongly support the conjecture that the scheme proposed in~\cite{BoutinNguyenSeguin20} is not stiffly stable under the only condition~\eqref{DSDC}, i.e if \eqref{SKC} is fulfilled but with $B_v<0$, even if the energy estimate is available.\\

In the present article, we construct a new family of stiffly stable finite difference schemes based on the central scheme with either the semi-discrete framework, or with the implicit discrete in time solver. The boundary treatment is again based on the SBP technique, now together with the SAT technique for imposing the physical boundary conditions in a weak sense. This technique is a penalty like one that incorporates the boundary condition as a kind of relaxation term in a boundary evolution equation. It was proposed in \cite{CarpenterGottliebAbarbanel94, CarpenterNordstromGottlieb99} and we will see that the method is strongly compatible with the obtaining of appropriate energy estimates. As a general tool, the SBP-SAT is thought to be more tractable and extendable to further extensions (e.g. high-order schemes). We now introduce the precise discrete framework in which we operate, the assumptions and the two main results.

\subsection{Description of the semi-discrete numerical scheme}
We focus in this paper on the semi-discrete approximation of the IBVP \eqref{Problem} obtained by the central differencing scheme and we derive a sufficient condition for its stiff stability. Let $\Delta x>0$ be the space step and introduce the grid points $x_j=j\Delta x,$ for any $j\in\mathbb{N}$. 
At each grid point $x_j$, the approximation of the exact solution to \eqref{Problem} is denoted by
$U_j(t) \simeq \left(u(x_j,t), v(x_j,t)\right)^T$ (where we omit the explicit dependence on $\varepsilon$). To reduce the notations, let us introduce the matrices
\begin{align*}
	A=\begin{pmatrix}
		0 & 1
		\\
		a & 0
	\end{pmatrix},\quad
	S=\begin{pmatrix}
		0 & 0
		\\
		0 & -1
	\end{pmatrix},\quad
	B=\begin{pmatrix}
		B_u & B_v
	\end{pmatrix}.
\end{align*}
The proposed semi-discrete approximation of the IBVP \eqref{Problem} is the following:
\begin{align}\label{NS}
	\begin{cases}
		\partial_t U_0(t)+ \left(\mathcal{Q}U\right)_0(t)=\varepsilon^{-1} SU_0(t) +\tfrac{2}{\Delta x}\Phi\left(BU_0(t)-b(t)\right), & t\geq 0,
		\\
		\partial_t U_j(t)+ \left(\mathcal{Q}U\right)_j(t)=\varepsilon^{-1}SU_j(t), & j\geq 1,\ t\geq 0,
		\\
		U_j(0)=f_j, & j\geq 0,
	\end{cases}
\end{align}
for a given discrete Cauchy data $f_j=U_j(0)$, $j\in\mathbb{N}$. The constant parameter vector $\Phi=(\alpha, \beta)^T$ enables a particular treatment close to the boundary which will be made explicit in the forthcoming Definition~\ref{Definition4}. Its choice is our main issue.

The finite difference operator $\mathcal{Q}$ is defined by means of the SBP technique proposed by Strand in \cite{Strand94} (see also beginnings and extensions of the idea in \cite{KreissScherer74} and \cite{GustafssonKreissOliger13}). The term $\mathcal{Q}U$ is a consistent approximation of the first order space-derivative $A\partial_x U$ in the sense that $(\mathcal{Q}U)(x_j,t)=A\partial_x U(x_j,t)+\mathcal{O}\left(\Delta x^p\right)$ for some $p>0$ ($p=2$ for the second order central approximation). The first component of the difference approximation $(\mathcal{Q}U)_0$, corresponding to the discrete boundary point $j=0$, has a somehow slightly different treatment. This adjustment in the scheme may be interpreted as the application of the same central approximation at the boundary point $j=0$, but for another boundary condition determined through a ghost value $U_{-1}$. The corresponding value is obtained from the identity $U_1-2U_0+U_{-1}=0$.
Eliminating $U_{-1}$, then we obtain a one-sided approximation for $(\mathcal{Q}U)_0$. Finally, the considered difference operator is
\begin{align}\label{operator}
	\begin{split}
		(\mathcal{Q}U)_j=\begin{cases}
			\frac{1}{2\Delta x}A\left(U_{j+1}-U_{j-1}\right), & j\geq 1,
			\\
			\frac{1}{\Delta x}A\left(U_1-U_0\right), & j=0.
		\end{cases}
	\end{split}
\end{align}

Together with the SAT technique developed by Carpenter and collaborators \cite{CarpenterGottliebAbarbanel94,CarpenterNordstromGottlieb99} for imposing the boundary condition, the SBP operator described in \eqref{operator} can be used to discretize any IBVP of the form \eqref{Problem}.
Associated to this SBP technique, an energy estimate is obtained by using the modified scalar product and norm
\begin{align}\label{scalar_product}
	\left\langle U,V\right\rangle_{\Delta x}=\dfrac{\Delta x}{2} \left\langle U_0,V_0\right\rangle
	+\Delta x\sum_{j=1}^{\infty}\left\langle U_j,V_j\right\rangle,\quad
	\Vert U\Vert^2_{\Delta x}=\left\langle U,U\right\rangle_{\Delta x}
\end{align}
with $\left\langle .,.\right\rangle $ being the usual Euclidean inner product on $\mathbb{R}^2$. We refer again to \cite{GustafssonKreissOliger13} for more details on the general technique.


\subsection{Main result}
As mentioned earlier, for the continuous IBVP~\eqref{Problem}, the usual UKC~\eqref{UKC} is insufficient to provide uniform a priori estimates in the relaxation limit. In fact, the more stringent condition SKC~\eqref{SKC} must be considered to obtain such uniform a priori estimates. Our goal is to similarly determine a sufficient condition for the stiff stability of the semi-discrete IBVP~\eqref{NS}, specifically in terms of stability estimates that remains uniform with respect to the stiffness of the relaxation term. 

\medskip

The design of appropriate discrete boundary conditions requires careful attention to the choice of the SAT-parameter vector $\Phi=(\alpha,\beta)^T$ in~\eqref{NS}. The suitable set of parameters is defined in the following definition.

\begin{df}[SAT-parameter]\label{Definition4}
	Let $B_u>0$ and $B_v\in\mathbb{R}$. The pair $(\alpha,\beta)\in\RR^2$ is called a SAT-parameter if it satisfies the following inequalities:
	\begin{align}\label{SAT1}
		\begin{cases}
			\alpha<(3+2\sqrt{2})\min(B_v^{-1},0), &\text{if }B_v\neq 0
			\\
			\alpha<0, &\text{if }B_v=0,
		\end{cases}
	\end{align}
	and
	\begin{align}\label{SAT2}
		\begin{cases}
			-a(1-B_v\alpha)-2a\sqrt{\vert B_v\alpha\vert}< \beta B_u <
			-a(1-B_v\alpha)+2a\sqrt{\vert B_v\alpha\vert}, &\text{if }B_v\neq 0
			\\
			\beta B_u=-a, &\text{if }B_v=0.
		\end{cases}
	\end{align}
\end{df}

\begin{thm}[Semi-discrete scheme]\label{main_results}
	Let $B_u>0$, $B_v\in\RR$ and $(\alpha,\beta)$ be a SAT-parameter in the sense of~Definition~\ref{Definition4}. 
	Assume that the parameters $\Delta x\in (0,1]$ and $\varepsilon >0$ satisfy the discrete strict dissipativity condition~\eqref{DSDC}. 
	%
	For any $T>0$, there exists $C_T>0$ such that for any $\left(f_j\right)_{j\in\mathbb{N}}\in \ell^2\left(\mathbb{N},\mathbb{R}^2\right)$ and any $b\in L^2\left(\mathbb{R}^+,\mathbb{R}\right)$, the solution $\left(U_j\right)_{j\in\mathbb{N}}\in\mathcal{C}^1(\RR^+,\ell^2\left(\mathbb{N},\mathbb{R}^2\right))$ to \eqref{NS} satisfies
	\begin{align}\label{main_result}
		\int_0^T\sum_{j\geq 0} \Delta x\left\vert U_j(t)\right\vert^2 \mathsf{d}t 
		+\int_0^T \left\vert U_0(t)\right\vert^2 \mathsf{d}t
		\leq C_T(\Delta x, \varepsilon)\Bigl(
			\sum_{j\geq 0}\Delta x\left\vert f_j\right\vert^2
			+\int_0^T |b(t)|^2 \,\mathsf{d}t
		\Bigr),
	\end{align}
	where the constant $C_T(\Delta x, \varepsilon)$ is independent of the data $f$ and $b$ and satisfies the uniform behaviour hereafter.
	\begin{itemize}
		\item[a)] For $B_v>0$, the inequality~\eqref{main_result} holds uniformly, i.e. with $C_T(\Delta x, \varepsilon)=C_T$ independent of $\varepsilon$ and $\Delta x$.
		\item[b)] For $B_v\leq 0$, the inequality~\eqref{main_result} holds uniformly for $\varepsilon=O(\Delta x)$, i.e. with $C_T(\Delta x, \varepsilon)=C_T(\delta_0)$, as soon as $\Delta x\geq \delta_0\varepsilon$, where $\delta_0>-4aB_u^{-1}B_v$.
	\end{itemize}
\end{thm}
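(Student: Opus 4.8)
The plan is to derive the estimate~\eqref{main_result} via the SBP--SAT energy method, treating the $\ell^2_{\Delta x}$-norm $\Vert U(t)\Vert_{\Delta x}^2$ as the Lyapunov functional. First I would compute $\tfrac{d}{dt}\Vert U(t)\Vert_{\Delta x}^2 = 2\langle U,\partial_t U\rangle_{\Delta x}$ and substitute the scheme~\eqref{NS}. The three contributions are: the transport term $-2\langle U,\mathcal{Q}U\rangle_{\Delta x}$, the relaxation term $2\varepsilon^{-1}\langle U, SU\rangle_{\Delta x} = -2\varepsilon^{-1}\sum_j w_j |v_j|^2\le 0$, and the SAT penalty term $\Delta x \cdot \tfrac{2}{\Delta x}\langle U_0, \Phi(BU_0-b)\rangle = 2\langle U_0,\Phi\rangle(BU_0-b)$. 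The key algebraic identity is the summation-by-parts property of $\mathcal{Q}$ with respect to $\langle\cdot,\cdot\rangle_{\Delta x}$: using the one-sided stencil at $j=0$ together with the half-weight $\Delta x/2$ on $U_0$, the bulk terms telescope and one is left with a pure boundary term, so that $-2\langle U,\mathcal{Q}U\rangle_{\Delta x} = \langle U_0, A U_0\rangle = 2 u_0 v_0$ (using $A=\begin{pmatrix}0&1\\a&0\end{pmatrix}$), up to a sign I would pin down carefully. Hence
\begin{align}\label{energy-identity}
	\tfrac{d}{dt}\Vert U(t)\Vert_{\Delta x}^2 + \tfrac{2}{\varepsilon}\sum_{j\ge 0} w_j |v_j|^2 = 2 u_0 v_0 + 2\langle U_0,\Phi\rangle (BU_0-b),
\end{align}
where $w_j=\Delta x$ for $j\ge1$ and $w_0=\Delta x/2$, and the right-hand side is a quadratic form in $(u_0,v_0)$ plus a linear-in-$b$ cross term.

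The heart of the argument is to show that the boundary quadratic form
\[
	\mathcal{B}(u_0,v_0) := 2u_0 v_0 + 2(\alpha u_0 + \beta v_0)(B_u u_0 + B_v v_0)
\]
is negative definite (strictly dissipative) under the SAT-parameter constraints~\eqref{SAT1}--\eqref{SAT2}, and moreover controls $|U_0|^2$ from below with a margin, and that this strict negative-definiteness survives when one adds the relaxation contribution carried by the $j=0$ term $\tfrac{1}{\varepsilon}|v_0|^2$ — this is where the discrete strict dissipativity condition~\eqref{DSDC} enters, exactly as in~\cite{BoutinNguyenSeguin20}. Concretely, the matrix of $\mathcal{B}$ is $\begin{pmatrix} 2\alpha B_u & 1+\alpha B_v+\beta B_u \\ 1+\alpha B_v+\beta B_u & 2\beta B_v\end{pmatrix}$; negative-definiteness requires $\alpha B_u<0$ (hence $\alpha<0$, consistent with~\eqref{SAT1}) and the determinant condition $4\alpha\beta B_u B_v > (1+\alpha B_v+\beta B_u)^2$. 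Writing $t=\beta B_u$, this is a downward parabola in $t$ with roots exactly at $-a(1-B_v\alpha)\pm 2a\sqrt{|B_v\alpha|}$ when one also uses that the extra $\varepsilon^{-1}|v_0|^2/2$ term forces a comparison with $a$ (the factor $a$ and the shift $-a(1-B_v\alpha)$ come from completing the square after incorporating~\eqref{DSDC}); the real-roots condition $(1-B_v\alpha)^2>0$ plus the sign bookkeeping is what yields the threshold $\alpha<(3+2\sqrt{2})\min(B_v^{-1},0)$ in~\eqref{SAT1} (note $3+2\sqrt2=(1+\sqrt2)^2$). I would verify these case-by-case for $B_v>0$, $B_v<0$, and the degenerate $B_v=0$ (where~\eqref{SAT2} collapses to $\beta B_u=-a$ and~\eqref{SAT1} to $\alpha<0$, making $\mathcal B$ negative semidefinite with a clean $-|\alpha B_u||u_0|^2$ residual).

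Granting strict dissipativity, one gets $\mathcal{B}(u_0,v_0)\le -c_0|U_0|^2 + C|b|^2$ for some $c_0>0$ (absorbing the $b$ cross term by Young's inequality), so~\eqref{energy-identity} becomes a differential inequality $\tfrac{d}{dt}\Vert U\Vert_{\Delta x}^2 + c_0|U_0|^2 \le C|b(t)|^2$. Integrating on $[0,t]$ gives the boundary-trace bound $\int_0^T|U_0|^2\,\mathsf dt \le C(\Vert f\Vert_{\Delta x}^2 + \Vert b\Vert_{L^2}^2)$ and the pointwise bound $\Vert U(t)\Vert_{\Delta x}^2 \le \Vert f\Vert_{\Delta x}^2 + C\Vert b\Vert_{L^2}^2$; a further time integration on $[0,T]$ yields $\int_0^T\Vert U(t)\Vert_{\Delta x}^2\,\mathsf dt \le T(\Vert f\Vert_{\Delta x}^2 + C\Vert b\Vert_{L^2}^2)$, which is precisely~\eqref{main_result} with $C_T = \max(1,c_0^{-1})\cdot C\cdot(1+T)$. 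The uniformity claims a) and b) then follow by tracking where $\Delta x$ and $\varepsilon$ appeared: for $B_v>0$ the condition~\eqref{DSDC} is automatic and $c_0$ depends only on $(a,B_u,B_v,\alpha,\beta)$, giving an $\varepsilon$- and $\Delta x$-independent constant; for $B_v\le 0$ one needs the relaxation term at $j=0$ to compensate the loss of sign in $\mathcal B$, so $c_0$ degrades but stays bounded below as long as $\Delta x/\varepsilon$ is bounded below by $\delta_0>-4aB_u^{-1}B_v$, exactly~\eqref{DSDC} in the sharp form. I expect the main obstacle to be the boundary algebra in the third paragraph above — correctly incorporating the half-weight $w_0=\Delta x/2$ and the one-sided stencil so that the SBP identity produces exactly $2u_0v_0$ with no leftover bulk term, and then pinning down the precise interplay between that boundary form, the $\varepsilon^{-1}|v_0|^2$ term, and condition~\eqref{DSDC} so that the roots of the governing parabola match~\eqref{SAT2} on the nose; the rest is a standard Grönwall-type integration. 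One should also check at the outset that~\eqref{NS} is well-posed in $\mathcal{C}^1(\RR^+,\ell^2)$ (the operator is bounded on $\ell^2_{\Delta x}$ for fixed $\Delta x$, so this is immediate), legitimizing all the formal manipulations.
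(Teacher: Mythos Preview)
Your overall strategy --- SBP energy identity, negative-definiteness of the boundary quadratic form under the SAT constraints, Young's inequality for the $b$ cross term, then time integration --- is exactly what the paper does. But there is a genuine gap at the very first step: you take the unweighted energy $\|U\|_{\Delta x}^2$, and for that choice the SBP telescoping \emph{fails}. The matrix $A=\begin{pmatrix}0&1\\a&0\end{pmatrix}$ is not symmetric when $a\neq 1$, so $\langle U_j,AU_{j+1}\rangle\neq\langle U_{j+1},AU_j\rangle$; concretely the difference is $(1-a)(u_jv_{j+1}-v_ju_{j+1})$, and you are left with a nonvanishing, non-sign-definite bulk sum $\tfrac{1-a}{2}\sum_{j\ge1}(u_jv_{j+1}-v_ju_{j+1})$ that your identity~\eqref{energy-identity} cannot absorb. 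Relatedly, your claim $\langle U_0,AU_0\rangle=2u_0v_0$ is incorrect: it equals $(1+a)u_0v_0$.

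The fix is to introduce the symmetrizer $H=\begin{pmatrix}a&0\\0&1\end{pmatrix}$ and work with $E(t)=\langle U,HU\rangle_{\Delta x}$. Then $HA$ is symmetric, the telescoping goes through cleanly, and the boundary residue becomes $\tfrac12\langle AU_0,HU_0\rangle=au_0v_0$. The SAT term likewise picks up $H$, giving $(BU_0-b)\langle\Phi,HU_0\rangle=(B_uu_0+B_vv_0-b)(\alpha a\,u_0+\beta v_0)$. With these $a$-factors in place, the boundary quadratic form to be shown positive definite is
\[
\mathcal{F}(u,v)=\tfrac{\Delta x}{\varepsilon}v^2-2(B_uu+B_vv)(\alpha a\,u+\beta v)-2auv,
\]
whose associated $2\times2$ matrix has diagonal entries $-2\alpha aB_u$ and $\tfrac{\Delta x}{\varepsilon}-2\beta B_v$ and off-diagonal $-(B_u\beta+B_v\alpha a+a)$. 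The determinant condition, read as a quadratic in $t=\beta B_u$, now has roots exactly $-a(1-B_v\alpha)\pm 2a\sqrt{|B_v\alpha|}$, matching~\eqref{SAT2} on the nose; the constraint~\eqref{SAT1} is what guarantees these roots are real and correctly ordered. Your intuition that ``the factor $a$ comes from completing the square after incorporating~\eqref{DSDC}'' is off: the $a$'s come from $H$, while~\eqref{DSDC} contributes only the $\tfrac{\Delta x}{\varepsilon}v_0^2$ term (which is precisely what rescues positive-definiteness in the case $B_v\le 0$, yielding part~b)). Once you insert $H$, the rest of your argument --- Young's inequality for the $b$ term, the differential inequality $\partial_t E\le -\tfrac{c}{2}|U_0|^2+\tfrac{D}{c}b^2$, and integration over $[0,T]$ --- is correct and coincides with the paper's proof.
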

In the case of full time discretization, ensuring the stability of the algorithm requires that boundary conditions be specified in accordance with the chosen time discretization method (e.g., forward Euler, backward Euler, Runge-Kutta,...). This is connected to the energy conservation of the numerical scheme, as it depends on both the structure of the problem and the discretization approach used. 

For example, we consider the simplest fully discrete approximation of the IBVP \eqref{Problem}, which is obtained by the implicit scheme in time treatment of the semi-discrete scheme~\eqref{NS}:
\begin{align}\label{NS_full}
	\begin{cases}
		\frac{1}{\Delta t}\left(U_0^{n+1}-U_0^n\right)+ \left(\mathcal{Q}U\right)^{n+1}_0=\varepsilon^{-1} SU^{n+1}_0 +\tfrac{2}{\Delta x}\Phi\left(BU^{n+1}_0-b^{n+1}\right), & n\geq 0,
		\\
		\frac{1}{\Delta t}\left(U_j^{n+1}-U_j^n\right)+ \left(\mathcal{Q}U\right)^{n+1}_j=\varepsilon^{-1}SU^{n+1}_j, & j\geq 1,\ n\geq 0,
		\\
		U^0_j=f_j, & j\geq 0,
	\end{cases}
\end{align}
with $U_j^n$ be the approximation of the exact solution to \eqref{Problem} at the grid point $(x_j, t^n)=(j\Delta x, n\Delta t)$, for any $(j,n)\in\mathbb{N}\times\mathbb{N}$.
In this case, we can prove that the discrete strict dissipativity condition \eqref{DSDC} is still the 
sufficient condition for the stiff stability of the fully discrete IBVP~\eqref{NS_full}. It is the result of the following theorem
\begin{thm}[Implicit scheme]\label{main_results_full}
	Let $B_u>0$, $B_v\in\RR$ and $(\alpha,\beta)$ be a SAT-parameter in the sense of~Definition~\ref{Definition4}. 
	Assume that the parameters $\Delta x\in (0,1]$ and $\varepsilon >0$ satisfy the discrete strict dissipativity condition~\eqref{DSDC}.
	For any $T>0$, there exists $C_T>0$ such that for all $\Delta t>0$, any $\left(f_j\right)_{j\in\mathbb{N}}\in \ell^2\left(\mathbb{N},\mathbb{R}^2\right)$ and $\left(b^n\right)_{n\in\mathbb{N}}\in \ell^2\left(\mathbb{N},\mathbb{R}\right)$, the solution $\left(U_j^n\right)_{(j,n)\in\mathbb{N}\times \mathbb{N}}$ to \eqref{NS_full} satisfies
	\begin{align}\label{main_result_full}
		\sum_{n=0}^N\sum_{j\geq 0}\Delta x\Delta t\vert U_j^n\vert^2
		+\sum_{n=1}^N\Delta t \vert U_0^n\vert^2
		\leq C_T(\Delta x, \varepsilon)\left(
		\sum_{j\geq 0} \Delta x\vert f_j\vert^2 
		+\sum_{n=1}^N\Delta t\vert b^n\vert^2
		\right)
	\end{align}
	where $N:=T/\Delta t$ and the constant $C_T(\Delta x,\varepsilon)$ is independent of the data $f$ and $b$ and satisfies the uniform behaviour hereafter.
	\begin{itemize}
		\item[a)] For $B_v>0$, the inequality~\eqref{main_result_full} holds uniformly, i.e. with $C_T(\Delta x, \varepsilon)=C_T$ independent of $\varepsilon$ and $\Delta x$.
		\item[b)] For $B_v\leq 0$, the inequality~\eqref{main_result_full} holds uniformly for $\varepsilon=O(\Delta x)$, i.e. with $C_T(\Delta x, \varepsilon)=C_T(\delta_0)$, as soon as $\Delta x\geq \delta_0\varepsilon$, where $\delta_0>-4aB_u^{-1}B_v$.
	\end{itemize}
\end{thm}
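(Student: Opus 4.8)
The strategy is to reproduce, at the level of the backward-Euler time stepping, the SBP--SAT energy argument that underlies Theorem~\ref{main_results}, exploiting that implicit time stepping only contributes a favorable (nonnegative) numerical dissipation term. First I would record the well-posedness of each implicit step: rewriting the first two lines of~\eqref{NS_full} as $(I+\Delta t\,\mathcal{M}_{\Delta x,\varepsilon})U^{n+1}=U^n+\tfrac{2\Delta t}{\Delta x}\Phi\,b^{n+1}$, where $\mathcal{M}_{\Delta x,\varepsilon}$ gathers $\mathcal{Q}$, $-\varepsilon^{-1}S$ and the SAT contribution, and observing (this is exactly the content of the energy identity below) that $\mathcal{M}_{\Delta x,\varepsilon}$ is bounded on $\ell^2(\mathbb{N},\mathbb{R}^2)$ with symmetric part bounded above; hence $I+\Delta t\,\mathcal{M}_{\Delta x,\varepsilon}$ is boundedly invertible on $\ell^2$ for every $\Delta t>0$ and the iterates $U^n$ are well defined in $\ell^2$.

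The heart of the proof is a discrete energy identity. Taking the $\langle\cdot,\cdot\rangle_{\Delta x}$ inner product of~\eqref{NS_full} with $U^{n+1}$, using the elementary identity $2\langle a-b,a\rangle=|a|^2-|b|^2+|a-b|^2$ for the time increment, and noting that the factor $\tfrac{2}{\Delta x}$ in the SAT term is compensated by the weight $\tfrac{\Delta x}{2}$ carried by $\langle\cdot,\cdot\rangle_{\Delta x}$ at $j=0$, one gets
\[
\Vert U^{n+1}\Vert_{\Delta x}^2-\Vert U^{n}\Vert_{\Delta x}^2+\Vert U^{n+1}-U^{n}\Vert_{\Delta x}^2+2\Delta t\,\langle\mathcal{Q}U^{n+1},U^{n+1}\rangle_{\Delta x}=2\Delta t\,\varepsilon^{-1}\langle SU^{n+1},U^{n+1}\rangle_{\Delta x}+2\Delta t\,\langle\Phi\,(BU_0^{n+1}-b^{n+1}),U_0^{n+1}\rangle.
\]
Now I would invoke the summation-by-parts property of Strand's operator $\mathcal{Q}$ with respect to $\Vert\cdot\Vert_{\Delta x}$, which turns $\langle\mathcal{Q}U^{n+1},U^{n+1}\rangle_{\Delta x}$ into a quadratic boundary term in $U_0^{n+1}$; together with $\langle SU^{n+1},U^{n+1}\rangle_{\Delta x}=-\Delta x\sum_{j\ge1}|v_j^{n+1}|^2-\tfrac{\Delta x}{2}|v_0^{n+1}|^2$ and the SAT quadratic term $(BU_0^{n+1})(\Phi\!\cdot\!U_0^{n+1})$, all the $O(1)$ and $O(\Delta x/\varepsilon)$ contributions carried by the single node $j=0$ can be collected into one quadratic form $\mathcal{L}_{\Delta x,\varepsilon}(U_0^{n+1})$ on $\mathbb{R}^2$, whose coefficients involve $a$, $\alpha$, $\beta$, $B_u$, $B_v$ and the ratio $\Delta x/\varepsilon$. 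The identity then reads
\[
\Vert U^{n+1}\Vert_{\Delta x}^2-\Vert U^{n}\Vert_{\Delta x}^2+\Vert U^{n+1}-U^{n}\Vert_{\Delta x}^2+2\Delta t\,\mathcal{D}^{n+1}=2\Delta t\,\mathcal{L}_{\Delta x,\varepsilon}(U_0^{n+1})-2\Delta t\,b^{n+1}\,(\Phi\!\cdot\!U_0^{n+1}),
\]
with $\mathcal{D}^{n+1}:=\varepsilon^{-1}\Delta x\sum_{j\ge1}|v_j^{n+1}|^2\ge0$ the interior relaxation dissipation.

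The crux, and the step I expect to be the main obstacle, is to show that $\mathcal{L}_{\Delta x,\varepsilon}$ is \emph{uniformly} negative definite: there exists $c>0$ with $\mathcal{L}_{\Delta x,\varepsilon}(W)\le-c|W|^2$ for all $W\in\mathbb{R}^2$, with $c$ independent of $(\Delta x,\varepsilon)$ when $B_v>0$, and $c=c(\delta_0)>0$ when $B_v\le0$ and $\Delta x\ge\delta_0\varepsilon$ with $\delta_0>-4aB_u^{-1}B_v$. This is precisely where Definition~\ref{Definition4} and the condition~\eqref{DSDC} are used: writing $\mathcal{L}_{\Delta x,\varepsilon}$ explicitly as a quadratic in $(u_0,v_0)$, its negative definiteness reduces to the negativity of a leading coefficient together with a strictly-negative-discriminant condition, and the thresholds $3+2\sqrt2=(1+\sqrt2)^2$ and the interval endpoints $-a(1-B_v\alpha)\pm2a\sqrt{|B_v\alpha|}$ appearing in~\eqref{SAT1}--\eqref{SAT2} are exactly what makes this discriminant strictly negative while keeping $c$ bounded away from $0$; the sharper threshold $-4aB_u^{-1}B_v$ (versus the bare $-2aB_u^{-1}B_v$ read off from~\eqref{DSDC}) is the slack needed so that $c$ stays bounded below uniformly in $\Delta x/\varepsilon$ in regime~b). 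I stress that this computation is purely spatial and algebraic, and is identical to the one in the proof of Theorem~\ref{main_results}; the implicit time discretization does not affect it.

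Granting the coercivity, the conclusion is routine and, in particular, uniform in $\Delta t$. Bounding $-2b^{n+1}(\Phi\!\cdot\!U_0^{n+1})\le c|U_0^{n+1}|^2+c^{-1}|\Phi|^2|b^{n+1}|^2$ by Young's inequality, absorbing $c|U_0^{n+1}|^2$ into $2\Delta t\,\mathcal{L}_{\Delta x,\varepsilon}(U_0^{n+1})\le-2c\Delta t|U_0^{n+1}|^2$, and dropping the nonnegative terms $\Vert U^{n+1}-U^{n}\Vert_{\Delta x}^2$ and $2\Delta t\,\mathcal{D}^{n+1}$, the identity yields
\[
\Vert U^{n+1}\Vert_{\Delta x}^2-\Vert U^{n}\Vert_{\Delta x}^2+c\,\Delta t\,|U_0^{n+1}|^2\le c^{-1}|\Phi|^2\,\Delta t\,|b^{n+1}|^2.
\]
Summing over $n$ and telescoping gives $\max_{0\le n\le N}\Vert U^n\Vert_{\Delta x}^2\le\Vert f\Vert_{\Delta x}^2+c^{-1}|\Phi|^2\sum_{n=1}^N\Delta t|b^n|^2$ and $c\sum_{n=1}^N\Delta t|U_0^n|^2\le\Vert f\Vert_{\Delta x}^2+c^{-1}|\Phi|^2\sum_{n=1}^N\Delta t|b^n|^2$; the latter already provides the boundary trace term in~\eqref{main_result_full}, while the interior space-time term follows from $\sum_{n=1}^N\sum_{j\ge0}\Delta x\Delta t|U_j^n|^2=\sum_{n=1}^N\Delta t\,\Vert U^n\Vert_{\Delta x}^2\le N\Delta t\max_n\Vert U^n\Vert_{\Delta x}^2=T\max_n\Vert U^n\Vert_{\Delta x}^2$. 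Collecting, the left-hand side of~\eqref{main_result_full} is bounded by $C_T(\Delta x,\varepsilon)\big(\Vert f\Vert_{\Delta x}^2+\sum_{n=1}^N\Delta t|b^n|^2\big)$ with $C_T(\Delta x,\varepsilon)$ a fixed function of $T$, $c$ and $|\Phi|$; invoking the two regimes for $c$ gives statements a) and b). Note that no discrete Grönwall inequality is needed, and that the time increment enters only through the favorable term $\Vert U^{n+1}-U^{n}\Vert_{\Delta x}^2\ge0$, which is the structural reason why~\eqref{DSDC} alone — with no relation imposed between $\Delta t$ and $(\Delta x,\varepsilon)$ — already suffices for~\eqref{NS_full}.
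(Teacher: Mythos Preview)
Your overall strategy matches the paper's proof exactly: test the scheme against the new time level, use the polarization identity to extract the favorable implicit-dissipation term $\|U^{n+1}-U^n\|^2\ge 0$, invoke SBP to reduce the spatial term to a boundary quadratic form, and then appeal to the coercivity result (Proposition~\ref{Lemma5}) established for the semi-discrete case. The final telescoping and the way you obtain the space-time interior term via $T\max_n\|U^n\|^2$ are essentially what the paper does.

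There is however a concrete gap: you test~\eqref{NS_full} against $U^{n+1}$ in the plain $\langle\cdot,\cdot\rangle_{\Delta x}$ inner product, whereas the paper tests against $HU^{n+1}$ with the symmetrizer $H=\mathrm{diag}(a,1)$ and works with the energy $E_n=\langle U^n,HU^n\rangle_{\Delta x}$. This is not cosmetic. The SBP cancellation~\eqref{Eng11} relies on the symmetry of $HA$: one needs $\langle AU_{j},HU_{j+1}\rangle=\langle AU_{j+1},HU_{j}\rangle$ to telescope the interior sum. With the unweighted pairing, $\langle AU_{j},U_{j+1}\rangle-\langle AU_{j+1},U_{j}\rangle=(1-a)(v_j u_{j+1}-u_j v_{j+1})$, which for $a\ne 1$ leaves a non-telescoping, sign-indefinite interior sum; your claimed reduction of $\langle \mathcal{Q}U^{n+1},U^{n+1}\rangle_{\Delta x}$ to a pure boundary term therefore fails. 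Moreover, the boundary quadratic form you would obtain (involving $\alpha u_0+\beta v_0$ and $(1+a)u_0v_0$) is \emph{not} the form $\mathcal{F}$ of~\eqref{quadratic_F}, so Proposition~\ref{Lemma5} does not apply to it; the SAT constraints~\eqref{SAT1}--\eqref{SAT2} are calibrated precisely for the $H$-weighted form. The fix is immediate: pair with $HU^{n+1}$, use that $H$ is symmetric positive definite so that the time-increment identity still yields $E_{n+1}-E_n+\langle U^{n+1}-U^n,H(U^{n+1}-U^n)\rangle_{\Delta x}$ with a nonnegative last term, and then your argument goes through verbatim, landing exactly on~\eqref{Eng12} and the right-hand side of~\eqref{Eng12s}.
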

To address the stiff well-posedness of the Jin-Xin relaxation model \cite{JinXin95}, Xin and Xu derived the SKC~\eqref{SKC} in \cite{XinXu00}. Specifically, they demonstrate that the IBVP~\eqref{Problem} is well-posed if and only if the SKC~\eqref{SKC} holds.
However, in the discrete IBVP~\eqref{NS}, it appears that even the SKC is insufficient to obtain uniform stability estimates. It is worth noting that the discrete strict dissipativity condition \eqref{DSDC} is not implied by the SKC~\eqref{SKC}, likely due to some numerical diffusion at the boundary.

\bigskip

The paper is organized as follows. The Theorem~\ref{main_results} and its fully discrete counterpart Theorem~\ref{main_results_full} are studied in Section~\ref{ENGMT} using the discrete energy method. The appropriate selection of the SAT-parameter $(\alpha, \beta)$ is discussed in detail in Section~\ref{sec:SAT}, with several technical points deferred to the appendix in Section~\ref{Sec:Technical}. 
To demonstrate the importance of the condition~\eqref{DSDC}, we present numerical results in Section~\ref{Section_NE}, exploring various values of the boundary parameters $(B_u,B_v)$. These results illustrate the behaviour of solutions in both the relaxation and characteristic variables, highlighting the efficiency of the numerical method.

\section{The energy estimates for the linear damped wave equation}\label{ENGMT}

This is very classical to get the required energy estimates in the continuous case using integration by parts. Accordingly, we apply similar SBP (Summation-by-Parts) rules for the discrete approximation of $\partial /\partial x$.
Additionally, using the SAT strategy, with the choice of the SAT-parameter $(\alpha,\beta)$ as defined in Definition~\ref{Definition4}, along with certain technical lemmas in Section~\ref{Sec:Technical}, we proceed to prove our main result, Theorem~\ref{main_results}.

Throughout the following proof of Theorem~\ref{main_results}, we omit, for simplicity, the explicit dependence of the functions $U_0, u_0, v_0, b$ on the time variable $t$ and on $\varepsilon$. It should also be noted that we utilize the forthcoming technical Proposition~\ref{Lemma5} which plays a crucial role in the proof.
%

\subsection{Proof of Theorem~\ref{main_results}}

	Firstly we introduce a symmetrizer for the continuous PDE system~\eqref{I_1} that is appropriate for both the transport and the source term: this is a symmetric positive definite matrix $H$ with the properties that $HA$ is symmetric and $HS$ is negative semi-definite. The following simple matrix is convenient
	\begin{align*}
		H=\begin{pmatrix}
			a & 0
			\\
			0 & 1
		\end{pmatrix},
		\textrm{ with } 
		HA=\begin{pmatrix}
			0 & a \\ a & 0
		\end{pmatrix} \textrm{ and }
	HS=\begin{pmatrix}
		0 & 0\\ 0 & -1
	\end{pmatrix}.
	\end{align*}
	On the semi-discrete side, considering the space-discrete scalar product~\eqref{scalar_product}, and since $H$ is symmetric, we compute the evolution of the energy $E(t):=\langle U(t),HU(t)\rangle_{\Delta x}$ as follows 
	\begin{align}\label{Eng9}
		\tfrac12\partial_t E = 
		\tfrac12\partial_t\langle U,HU\rangle_{\Delta x}
		= \langle \partial_t U,HU\rangle_{\Delta x}
		=\tfrac12 \Delta x\langle \partial_t U_0,HU_0\rangle
		+\Delta x\sum_{j\geq 1}\langle \partial_t U_j,HU_j\rangle.
	\end{align}
	Now, due to the specific discrete operator $\mathcal{Q}$ in~\eqref{operator}, any solution to the semi-discrete IBVP~\eqref{NS} satisfies the following energy balance:
\begin{align}\label{Eng10}
	\begin{split}
		\tfrac12\partial_t E = 
		&=\tfrac{\Delta x}{2\varepsilon}\langle SU_0,HU_0\rangle
		+\tfrac{\Delta x}{\varepsilon}\sum_{j\geq 1}\langle SU_j,HU_j\rangle
		+ (BU_0-b)\langle \Phi , HU_0\rangle
		+\tfrac{1}{2}\langle AU_0,HU_0\rangle
		\\
		&-\tfrac{1}{2}\langle AU_1,HU_0\rangle 
		-\tfrac{1}{2}\sum_{j\geq 1}\langle AU_{j+1},HU_j\rangle
		+\tfrac{1}{2}\sum_{j\geq 1}\langle AU_{j-1},HU_j\rangle.
	\end{split}
\end{align}
Since $H$ and $HA$ are symmetric, the very last term on the right-hand side also reads
\begin{align}\label{Eng11}
		\sum_{j\geq 1}\langle AU_{j-1}, HU_j\rangle
		= \langle AU_0,HU_1\rangle +\sum_{j\geq 1}\langle AU_j, HU_{j+1}\rangle
		= \langle AU_1,HU_0\rangle +\sum_{j\geq 1}\langle AU_{j+1}, HU_{j}\rangle.
\end{align}
Substituting \eqref{Eng11} into \eqref{Eng10} removes the three last terms in \eqref{Eng10}: this is the interesting point with the SBP method.\\
The energy balance now directly comes from the remains terms in~\eqref{Eng11}, namely those involving only boundary values, except for the dissipative source term:
\begin{align*}
	\tfrac12\partial_t E 
	=
	-\tfrac{\Delta x}{\varepsilon}\sum_{j\geq 1}v_j^2
	-\tfrac{\Delta x}{2\varepsilon}v_0^2
	+\left(B_uu_0+B_vv_0-b\right)\left(\alpha au_0+\beta v_0\right)+au_0v_0.
\end{align*}
From the dissipativity of the interior relaxation term on the right-hand side, and keeping only boundary terms, one has
\begin{align}\label{Eng12s}
	\partial_t E
	\leq  -\tfrac{\Delta x}{\varepsilon}v_0^2
	+2\left(B_uu_0+B_vv_0-b\right)\left(\alpha au_0+\beta v_0\right)+2au_0v_0.
\end{align}
In order for the energy method to work, the boundary condition has now to satisfy
\begin{align}\label{Eng13}
	-\tfrac{\Delta x}{\varepsilon}v_0^2+2\left(B_uu_0+B_vv_0-b\right)\left(\alpha au_0+\beta v_0\right)+2au_0v_0\leq -C\vert U_0\vert^2+D  b^2
\end{align}
for some constants $C>0, D>0$ independent of the data and solution. Proposition~\ref{Lemma5} precisely consists in the analysis of this property, and from there we now that there exists $c>0$ such that 
\begin{align*}
	\mathcal{F}(U_0)\geq c\, \mathcal{I}(U_0)
\end{align*}
with 
\begin{equation}\label{quadratic_F}
	\mathcal{F}(U) := \tfrac{\Delta x}{\varepsilon}v^2 - 2\left(B_uu+B_vv\right)\left(\alpha au+\beta v\right) - 2auv,\qquad 
	\mathcal{I}(U) := |U|^2 = u^2+v^2.
\end{equation}
As a consequence, we obtain the following inequality
\begin{align}\label{proof_Lemma5_11}
	-\tfrac{\Delta x}{\varepsilon}v_0^2+2(B_uu_0+B_vv_0-b)(\alpha au_0+\beta v_0)+2au_0v_0
	\leq 
	-c\vert U_0\vert^2-2b(\alpha au_0+\beta v_0).
\end{align}
On the other hand, by using simple quadratic inequalities, 
we have
\begin{align}\label{proof_Lemma5_12}
	-\tfrac12c\vert U_0\vert^2-2b(\alpha au_0+\beta v_0)
	\leq 2c^{-1}(\alpha^2 a^2+\beta^2) b^2.
\end{align}
Assembling the two inequalities~\eqref{proof_Lemma5_11} and~\eqref{proof_Lemma5_12}, we get
\begin{align}\label{proof_Lemma5_13s}
	\begin{split}
		-\tfrac{\Delta x}{\varepsilon}v_0^2+2(B_uu_0+B_vv_0-b)(\alpha au_0+\beta v_0)+2au_0v_0
		\leq 
		-\tfrac{c}{2}\vert U_0\vert^2 + \tfrac{D}{c} b^2,
	\end{split}
\end{align}
where $D=2(\alpha^2 a^2+\beta^2)$.
Therefore, the energy balance~\eqref{Eng12s} implies the following inequality
\begin{align}\label{Eng14}
	\partial_t E \leq -\tfrac{c}{2}\vert U_0\vert^2 + \tfrac{D}{c} b^2,
\end{align}
and, integrating over $t\in[0,T]$, we then have:
\begin{align}\label{Eng15}
	E(T)+\tfrac{c}{2}\int_0^T\vert U_0(t)\vert^2 \mathsf{d}t 
	\leq E(0) + \tfrac{D}{c}\int_0^Tb^2(t) \mathsf{d}t.
\end{align}
Let $\gamma \geq 0$ be given and consider $\partial_t(E(t)e^{-2\gamma t})= (\partial_t E(t) - 2\gamma E(t))e^{-2\gamma t}$ together with~\eqref{Eng14} to get, after integrating over $t\in[0,T]$ the new weighted estimate:
\begin{equation}\label{fullestimate}
	e^{-2\gamma T}E(T) + 2\gamma \int_0^T E(t)e^{-2\gamma t}\mathsf{d}t + \tfrac{c}{2} \int_0^T |U_0(t)|^2 e^{-2\gamma t}\mathsf{d}t \leq E(0) + \tfrac{D}{c} \int_0^T b^2(t) e^{-2\gamma t}\mathsf{d}t.
\end{equation}
For $\gamma=0$, we obviously recover the previous estimate~\eqref{Eng15}, but for $\gamma>0$ we obtain after crude bounds on the exponential growth terms:
\begin{equation}\label{eng16}
	E(T) + 2\gamma \int_0^T E(t)\mathsf{d}t + \tfrac{c}{2} \int_0^T |U_0(t)|^2\mathsf{d}t \leq e^{2\gamma T}\left(E(0) + \tfrac{D}{c} \int_0^T b^2(t)\mathsf{d}t\right).
\end{equation}
Finally, choosing a fixed value for $\gamma$, since $H$ is symmetric positive definite and from the previous inequality,
for any $T>0$, there exists a constant $C_T>0$ (depending on $T$ and on $H$, $\gamma$, $c$ and $D$) such that the following inequality holds
\begin{align}
	\int_0^T\sum_{j\geq 0}\Delta x\vert U_j(t)\vert^2 \mathsf{d}t
	+\int_0^T\vert U_0(t)\vert^2 \mathsf{d}t
	\leq C_T\left(
	\sum_{j\geq 0}\Delta x\vert f_j\vert^2
	+\int_0^T b^2(t) \mathsf{d}t
	\right).
\end{align}
This ends the proof of Theorem \ref{main_results}.

\subsection{Proof of Theorem~\ref{main_results_full}}	
We use similar techniques as in \eqref{Eng9}-\eqref{Eng11} to cover the time-implicit discretization. Any solution to the fully-discrete IBVP~\eqref{NS_full} satisfies the following energy balance
\begin{align}\label{Eng12p1}
	\begin{split}
	\tfrac{1}{\Delta t}\langle U^{n+1}-U^n,HU^{n+1}\rangle_{\Delta x}
\leq  &-\tfrac{\Delta x}{2\varepsilon}\left(v_0^{n+1}\right)^2
+\left(B_uu^{n+1}_0+B_vv^{n+1}_0-b^{n+1}\right)\left(\alpha au^{n+1}_0+\beta v^{n+1}_0\right)
\\
&+au^{n+1}_0v^{n+1}_0.
\end{split}
\end{align}
Moreover, since $H$ is symmetric positive definite matrix, the time-dissipation estimate for the implicit Euler method reads as follows:
\begin{align}\label{Eng12p}
	\begin{split}
	\langle U^{n+1}-U^n,HU^{n+1}\rangle_{\Delta x}
	&= \tfrac{1}{2}\left(
	\langle U^{n+1},HU^{n+1}\rangle_{\Delta x}
	-\langle U^{n},HU^{n}\rangle_{\Delta x}
	+\langle U^{n+1}-U^n,H\left(U^{n+1}-U^n\right)\rangle_{\Delta x}
		\right)
	\\
	&\geq \tfrac{1}{2}\left(
	\langle U^{n+1},HU^{n+1}\rangle_{\Delta x}
	-\langle U^{n},HU^{n}\rangle_{\Delta x}\right).
\end{split}
\end{align}
According to \eqref{Eng12p1} and \eqref{Eng12p}, we obtain the following inequality where we set $E_n := \langle U^{n},HU^{n}\rangle_{\Delta x}$:
\begin{align}\label{Eng12}
	\begin{split}
		\tfrac{1}{\Delta t}\left(E_{n+1}
		-E_n\right)
		\leq  &-\tfrac{\Delta x}{\varepsilon}\left(v_0^{n+1}\right)^2
		+2\left(B_uu^{n+1}_0+B_vv^{n+1}_0-b^{n+1}\right)\left(\alpha au^{n+1}_0+\beta v^{n+1}_0\right)
		+2au^{n+1}_0v^{n+1}_0.
	\end{split}
\end{align}
Let us mention that the right-hand side is now nothing but the discrete version at time $t^{n+1}$ of the right-hand side of inequality \eqref{Eng12s}. Therefore, no particular change in the analysis is required. Using the property~\eqref{proof_Lemma5_13s}, we easily have the discrete energy balance
\begin{align*}
\tfrac{1}{\Delta t}\left(E_{n+1}
-E_n\right) \leq -\tfrac{c}{2}\vert U_0^{n+1}\vert^2 + \tfrac{D}{c} \left(b^{n+1}\right)^2.
\end{align*}
As a consequence, the previous inequality becomes
\begin{align}\label{Eng15d}
	E_n
	+\tfrac{c}{2}\Delta t\sum_{k=1}^{n}\vert U_0^k\vert^2
	\leq E_0 
	+\tfrac{D}{c}\Delta t\sum_{k=1}^{n}\vert b^k\vert^2,\quad
	\text{for any } n>0.
\end{align}
Let us now fix some $T>0$, and consider integer $N$ such that $N\Delta t \leq T$. Since $H$ is symmetric positive definite and since the SBP-norm~\eqref{scalar_product} is uniformly equivalent to the usual $\Delta x$-weighted $\ell^2$-norm over the space $\ell^2(\mathbb{N},\RR^2)$, we infer from~\eqref{Eng15d} the two following inequalities :
\[\begin{aligned}
	\tfrac{c}{2}\Delta t\sum_{k=1}^{N}\vert U_0^k\vert^2
	& \leq C_0 \sum_{j\geq 0}\Delta x |f_j|^2 
	+\tfrac{D}{c}\Delta t\sum_{k=1}^{N}\vert b^k\vert^2,\\
	C_0^{-1} \sum_{k=0}^{N} \sum_{j\geq 0}\Delta t\Delta x |U_j^n|^2 &\leq (N+1)\Delta t \left(C_0\sum_{j\geq 0}\Delta x |f_j|^2 + \tfrac{D}{c}\Delta t\sum_{k=1}^{N}\vert b^k\vert^2 \right).
\end{aligned}\]
Assembling the two inequalities above, we obtain the estimate~\eqref{main_result_full} with $C_T>0$ (depending linearly on $T$ and also on $C_0, c, C_0$ and $D$). This ends the proof of Theorem \ref{main_results_full}.
\section{Choice of the SAT parameters}
\label{sec:SAT}
The simultaneous approximation term (SAT) technique, introduced in \cite{CarpenterGottliebAbarbanel94, CarpenterNordstromGottlieb99}, enforces boundary conditions weakly through a penalty-like term, which also facilitates the time-stability of the approximation. In this work, we employ the SAT technique by selecting the scalar parameters $\alpha$ and $\beta$ in~\eqref{NS}, which will be discussed in detail later. More precisely, we now state the primary property resulting from the choice of the SAT-parameter in Definition~\ref{Definition4}, particularly regarding the useful inequality \eqref{Eng13}. Several technical aspects in the upcoming proof are deferred to the appendix, Section~\ref{Sec:Technical}, including Lemma~\ref{Lemma2_appendix}, Lemma~\ref{lemma6_appendix} and Lemma~\ref{lemma7_appendix}.
\begin{pro}\label{Lemma5}
	Let $(\alpha,\beta)$ be SAT-parameter in the sense of Definition~\ref{Definition4}. Assume that the parameters $\Delta x\in (0,1]$, $\varepsilon>0$, $B_u>0$ and $B_v\in\RR$ satisfy the discrete strict dissipativity condition~\eqref{DSDC}.
	Then the quadratic form $\mathcal{F}$ in~\eqref{quadratic_F} is positive definite, meaning there exists a constant $c(\Delta x, \varepsilon)>0$ such that
	\begin{equation}
		\label{result_lemma5}
		\mathcal{F} \geq c(\Delta x, \varepsilon)\, \mathcal{I}. 
	\end{equation}
	More precisely, 
	\begin{itemize}
		\item[a)] For $B_v>0$, $\mathcal{F}$ is uniformly positive definite, i.e. with $c(\Delta x, \varepsilon)=c$ independent of $\varepsilon$ and $\Delta x$.
		\item[b)] For $B_v\leq 0$,$\mathcal{F}$ is positive definite uniformly for $\varepsilon=O(\Delta x)$, i.e. with $c(\Delta x, \varepsilon)=c(\delta_0)$, as soon as $\Delta x\geq\delta_0\varepsilon$, where $\delta_0>-4aB_u^{-1}B_v$.
	\end{itemize}
\end{pro}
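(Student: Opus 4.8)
The plan is to reduce the claim to an explicit analysis of the $2\times 2$ symmetric matrix $M$ associated with the quadratic form $\mathcal{F}(U) = \tfrac{\Delta x}{\varepsilon}v^2 - 2(B_uu+B_vv)(\alpha au + \beta v) - 2auv$, and to show that under the hypotheses of Definition~\ref{Definition4} together with~\eqref{DSDC}, both $\operatorname{tr} M>0$ and $\det M>0$, with the appropriate uniformity in $(\Delta x,\varepsilon)$. Writing $\mathcal{F}(U)=\langle U, MU\rangle$ with $U=(u,v)^T$, one reads off
\[
M = \begin{pmatrix}
-2\alpha a B_u & -\alpha a B_v - \beta B_u - a \\[2pt]
-\alpha a B_v - \beta B_u - a & \tfrac{\Delta x}{\varepsilon} - 2\beta B_v
\end{pmatrix}.
\]
The $(1,1)$ entry is $-2\alpha a B_u$, which is positive precisely because $\alpha<0$ (guaranteed by~\eqref{SAT1} in every case, since $\min(B_v^{-1},0)\le 0$). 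So the first key step is: $M_{11}>0$ holds unconditionally for a SAT-parameter. The second, and main, step is to control $\det M = M_{11}M_{22} - M_{12}^2$ from below by a positive quantity times $|U|^2$'s natural scale, i.e. to show $\det M \ge c_1 M_{11}$ (or $\ge c_1(M_{11}+M_{22})$) for a suitable $c_1>0$; combined with $M_{11}>0$ this gives $M\ge c\,\mathrm{Id}$ with $c$ explicitly computable from the entries (e.g. $c = \det M/\operatorname{tr} M$ works when $\operatorname{tr} M>0$ and $\det M>0$).

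The heart of the matter is therefore the sign and lower bound of $\det M$. I would first treat the generic case $B_v\neq 0$. Expanding,
\[
\det M = -2\alpha a B_u\left(\tfrac{\Delta x}{\varepsilon} - 2\beta B_v\right) - (\alpha a B_v + \beta B_u + a)^2.
\]
The term $-2\alpha a B_u \cdot \tfrac{\Delta x}{\varepsilon}$ is $\ge 0$ since $\alpha<0$; in the case $B_v>0$ this term is already uniformly bounded below only if $\Delta x/\varepsilon$ is bounded below, which is \emph{not} assumed — so for $B_v>0$ the uniform positivity must come entirely from the $\varepsilon$-independent remainder $-2\alpha a B_u\cdot(-2\beta B_v\cdot(-1)) = 4\alpha a B_u\beta B_v \cdots$; let me be careful: the $\varepsilon$-independent part of $\det M$ is $4\alpha a B_u\beta B_v - (\alpha a B_v + \beta B_u + a)^2$. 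Setting $s := B_v\alpha<0$ and $p := \beta B_u$, this is $4a s p - (as + p + a)^2$, and requiring this to be positive is exactly a quadratic inequality in $p$: its discriminant (in $p$) is $16a^2 s^2 - 4\cdot(\text{stuff})$, and working it through yields precisely the window $-a(1-s)-2a\sqrt{|s|} < p < -a(1-s)+2a\sqrt{|s|}$, which is condition~\eqref{SAT2}. Thus condition~\eqref{SAT2} is designed to make the $\varepsilon$-independent part of $\det M$ strictly positive, and~\eqref{SAT1} (namely $\alpha < (3+2\sqrt2)\min(B_v^{-1},0)$, i.e.\ $|s|$ large enough when $B_v>0$, or $s$ in a suitable range when $B_v<0$) is what makes this window nonempty \emph{and} compatible with the remaining sign constraints. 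For $B_v>0$ one then has $\det M \ge (\text{positive constant}) + (\text{nonneg.\ }\varepsilon\text{-term}) \ge c_0>0$ uniformly, and $M_{11}, M_{22}$ are likewise uniformly controlled, giving part~(a).

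For $B_v\le 0$ (part~(b)), the term $-2\alpha a B_u\cdot(-2\beta B_v) = 4\alpha a B_u\beta B_v$: with $\beta B_u \approx -a(1-s)$ from~\eqref{SAT2} and $s=B_v\alpha\ge 0$ here, the $\varepsilon$-independent remainder can degenerate, and one must instead exploit the genuinely $\varepsilon$-dependent term $-2\alpha a B_u\tfrac{\Delta x}{\varepsilon} = 2|\alpha| a B_u\tfrac{\Delta x}{\varepsilon} \ge 2|\alpha| a B_u\delta_0$ under the hypothesis $\Delta x\ge\delta_0\varepsilon$. The precise bookkeeping — showing that $\delta_0 > -4aB_u^{-1}B_v$ is exactly the threshold past which $\det M>0$ uniformly in $\delta_0$ — is where~\eqref{DSDC} enters and where the constant $c(\delta_0)$ is extracted; this, together with the degenerate endpoint case $B_v=0$ (where $\beta B_u = -a$ forces $M_{12} = -\alpha a B_v = 0$, so $M$ is diagonal with entries $-2\alpha a B_u>0$ and $\tfrac{\Delta x}{\varepsilon}>0$, trivial), completes the argument. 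I expect the main obstacle to be the careful case analysis in part~(b): disentangling the three small parameters $\varepsilon$, $\Delta x$ and the slack in the SAT-window, verifying the nonemptiness of the window~\eqref{SAT2} under~\eqref{SAT1} (this is where the constant $3+2\sqrt2$ and the $\sqrt{|s|}$ appear — presumably via an auxiliary optimization deferred to Lemma~\ref{Lemma2_appendix}, Lemma~\ref{lemma6_appendix}, Lemma~\ref{lemma7_appendix}), and confirming that the threshold $\delta_0>-4aB_u^{-1}B_v$ is both sufficient and sharp. The transport cross-term $-2auv$ (the lone $-a$ inside $M_{12}$) is the structural reason the naive choice $\Phi$ fails and is what the whole SAT-window is tuned to absorb; tracking it correctly through the discriminant computation is the delicate part.
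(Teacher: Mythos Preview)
Your plan is correct and essentially the same as the paper's: the paper also reduces everything to positive-definiteness of your matrix $M$, computing $\lambda_{\min}(M)$ explicitly (their expression $-B_v\beta-B_u\alpha a+\tfrac{\Delta x}{2\varepsilon}-\tfrac12\sqrt{\Delta}$ is exactly $\lambda_{\min}(M)$, reached via the substitution $X=v/u$ and a quadratic in $c$ rather than your trace/determinant route), and Lemmas~\ref{Lemma2_appendix}, \ref{lemma6_appendix}, \ref{lemma7_appendix} carry out precisely the sign-of-$B_v$ case analysis you anticipate. Two small slips that don't affect the strategy: for $B_v>0$, condition~\eqref{SAT1} is just $\alpha<0$ (since $\min(B_v^{-1},0)=0$), not ``$|s|$ large enough''; and for $B_v<0$ the $\varepsilon$-independent part of $\det M$ is always strictly negative (the discriminant in $p$ is $-16a^2 s<0$ there), not merely ``degenerate'', which is why the $\tfrac{\Delta x}{\varepsilon}$ contribution is indispensable.
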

\begin{proof}
	In the following proof, we simply denote $c(\Delta x, \varepsilon)=c$ the desired constant. 
	From the definition of $\mathcal{F}$ in~\eqref{quadratic_F}, the inequality~\eqref{result_lemma5} in fact corresponds to the following property
	\begin{equation}
		HA + \tfrac{\Delta x}{\varepsilon} HS + 2 \Re (H\Phi B) \leq -c I_2,
	\end{equation}
	or in the explicit coordinates:
	\begin{align}\label{proof_Lemma5_1}
		\forall (u,v)\in\RR^2\quad \left(
		2B_v\beta + c -\tfrac{\Delta x}{\varepsilon}
		\right)v^2+2\left(
		B_u\beta +B_v\alpha a+a
		\right)u v
		+\left(2B_u\alpha a+c\right) u^2\leq 0.
	\end{align}
	
	\medskip

	First, we start with some straightforward considerations based on examining the diagonal terms only. By considering the specific case where $v = 0$, the required inequality~\eqref{proof_Lemma5_1} reduces to $2B_u\alpha a + c \leq 0$. This condition is satisfied for some $c>0$ because, from the condition on $\alpha$ in ~\eqref{SAT1}, we have $\alpha<0$.\\
	Now, let us consider the specific case with $u = 0$. In this situation, the required inequality~\eqref{proof_Lemma5_1} becomes $2B_v\beta + c -\frac{\Delta x}{\varepsilon}\leq 0$.	According to Lemma~\ref{Lemma1_appendix}, under the choice of the SAT-parameter $(\alpha,\beta)$ from Definition~\ref{Definition4} and the condition~\eqref{DSDC}, we have $B_v\beta<0$ for $B_v\neq 0$. Thus, there exists $c\in (0,-2B_v\beta)$ that satisfies this inequality, independently of the values of $\Delta x$ and $\varepsilon$. For $B_v = 0$, the requirement $\Delta x = O(\varepsilon)$ clearly emerges as necessary to achieve a similar result. This implies that if there exists $\delta_0>0$ such that $\Delta x/\varepsilon\geq \delta_0$, we can choose $c\in (0,\delta_0)$.

	\medskip

	Now, we consider the general case with $u\neq 0$ and set $X=v/u$. The inequality \eqref{proof_Lemma5_1} then becomes
	\begin{align*}
		\left(
		2B_v\beta + c -\tfrac{\Delta x}{\varepsilon}
		\right)X^2+2\left(
		B_u\beta +B_v\alpha a+a
		\right)X
		+2B_u\alpha a+ c \leq 0.
	\end{align*}
	It holds true for any $X\in\mathbb{R}$ if and only if the following inequalities are satisfied 
	\begin{align*}
		\begin{cases}
			2B_v\beta + c -\tfrac{\Delta x}{\varepsilon}< 0
			\\
			\left(
			B_u\beta +B_v\alpha a+a
			\right)^2-	\left(
			2B_v\beta +c -\tfrac{\Delta x}{\varepsilon}
			\right)\left(
			2B_u\alpha a+c 
			\right)\leq 0.
		\end{cases}
	\end{align*}
	It is equivalent to the following system
	\begin{align}\label{proof_Lemma5_3}
		\begin{split}
			\begin{cases}
				c< -2B_v\beta +\tfrac{\Delta x}{\varepsilon}
				\\
				c^2+\left(
				2B_v\beta +2B_u\alpha a -\tfrac{\Delta x}{\varepsilon}
				\right)c
				+2B_u\alpha a\left(
				2B_v\beta -\tfrac{\Delta x}{\varepsilon}
				\right)
				-\left(
				B_u\beta+B_v\alpha a+a
				\right)^2\geq 0.
			\end{cases}
		\end{split}
	\end{align}
	We now consider the above algebraic expression as a quadratic polynomial in $c$. Let us define
	\begin{align}\label{Definition_Delta}
		\begin{split}
			\Delta:&= \left(
			2B_v\beta +2B_u\alpha a-\tfrac{\Delta x}{\varepsilon}
			\right)^2-4\left[
			2B_u\alpha a\left(
			2B_v\beta -\tfrac{\Delta x}{\varepsilon}
			\right)
			-\left(
			B_u\beta+B_v\alpha a+a
			\right)^2
			\right]
			\\
			&= \left(
			2B_v\beta -2B_u\alpha a-\tfrac{\Delta x}{\varepsilon}
			\right)^2
			+4\left(
			B_u\beta+B_v\alpha a+a
			\right)^2.
		\end{split}
	\end{align}
	Considering the roots of the above polynomial, the system \eqref{proof_Lemma5_3} is equivalent to
	\begin{align}\label{bonus}
		\begin{cases}
			c< -2B_v\beta +\tfrac{\Delta x}{\varepsilon}
			\\
			\left(
			c+B_v\beta+B_u\alpha a-\tfrac{\Delta x}{2\varepsilon}+\tfrac{1}{2}\sqrt{\Delta}
			\right)
			\left(
			c+ B_v\beta+B_u\alpha a-\tfrac{\Delta x}{2\varepsilon} -\tfrac12\sqrt{\Delta}
			\right)\geq 0.
		\end{cases}
	\end{align}
	From the definition of $\Delta$ in \eqref{Definition_Delta} , we have for all $\alpha,\beta\in\mathbb{R}$
	\begin{align}\label{proof_Lemma5_7}
		-B_v\beta-B_u\alpha a+\tfrac{\Delta x}{2\varepsilon}-\tfrac12\sqrt{\Delta}
		\leq
		-2B_v\beta +\tfrac{\Delta x}{\varepsilon}
		\leq
		-B_v\beta-B_u\alpha a+\tfrac{\Delta x}{2\varepsilon}+\tfrac12\sqrt{\Delta}.
	\end{align}
	On the other hand, from the result of Lemma~\ref{Lemma2_appendix}, we have
	\begin{align}\label{proof_Lemma5_6}
		-B_v\beta-B_u\alpha a+\tfrac{\Delta x}{2\varepsilon}-\tfrac12\sqrt{\Delta}>0.
	\end{align}
	According to \eqref{proof_Lemma5_7} and \eqref{proof_Lemma5_6}, the system~\eqref{bonus} is true provided that $c$ satisfies
	\begin{align}\label{proof_Lemma5_8}
		0<c\leq -B_v\beta-B_u\alpha a+\tfrac{\Delta x}{2\varepsilon}-\tfrac12\sqrt{\Delta}.
	\end{align}
	Now, we look at the following three situations occur according to the sign of $B_v$.
	\begin{itemize}
		\item If $B_v>0$ then, by  Lemma~\ref{lemma6_appendix}, one has
		\begin{align}\label{proof_Lemma5_9}
			\begin{split}
				0&<-B_v\beta - B_u\alpha a  -\sqrt{(B_v\beta -B_u\alpha a)^2+(B_u\beta +B_v\alpha a+a)^2}
				\\
				&<-B_v\beta-B_u\alpha a+\tfrac{\Delta x}{2\varepsilon}-\sqrt{\left(B_v\beta-B_u\alpha a-\tfrac{\Delta x}{2\varepsilon}\right)^2+(B_u\beta +B_v\alpha a+a)^2}.		
			\end{split}
		\end{align}
		According to \eqref{proof_Lemma5_8} and \eqref{proof_Lemma5_9}, the inequality \eqref{result_lemma5} holds with  
		\begin{align*}
			c\in \left(0, -B_v\beta -B_u\alpha a -\sqrt{(B_v\beta -B_u\alpha a)^2+(B_u\beta +B_v\alpha a+a)^2}\right].
		\end{align*}
		\item If $B_v<0$ then, by Lemma~\ref{lemma7_appendix}, one has
		\begin{align}\label{proof_Lemma5_10}
			\begin{split}
				0&<-B_v\beta -B_u\alpha a +\tfrac{\delta_0}{2}-\sqrt{\bigl(B_v\beta -B_u\alpha a-\tfrac{\delta_0}{2}\bigr)^2+(B_u\beta +B_v\alpha a+a)^2}
				\\
				&<
				-B_v\beta-B_u\alpha a+\tfrac{\Delta x}{2\varepsilon}-\sqrt{\left(B_v\beta-\tfrac{\Delta x}{2\varepsilon}-B_u\alpha a\right)^2+(B_u\beta +B_v\alpha a+a)^2}
			\end{split}
		\end{align}
		with $\Delta x\geq \delta_0\varepsilon$ and $\delta_0>-4aB_u^{-1}B_v$.
		\\
		According to \eqref{proof_Lemma5_8} and \eqref{proof_Lemma5_10} the inequality \eqref{result_lemma5} holds with
		\begin{align*}
			c\in \left(0,-B_v\beta -B_u\alpha a +\tfrac{\delta_0}{2}-\sqrt{\bigl(B_v\beta -B_u\alpha a-\tfrac{\delta_0}{2}\bigr)^2+(B_u\beta +B_v\alpha a+a)^2}\right].
		\end{align*}
		\item If $B_v=0$ then, under the condition on $\beta$ in \eqref{SAT2}, the inequality \eqref{proof_Lemma5_8} can be reformulated as
		\begin{align}\label{proof_Lemma5_10_2}
			0<c\leq -B_u \alpha a+\tfrac{\Delta x}{2\varepsilon}-\sqrt{\left(B_u\alpha a+\tfrac{\Delta x}{2\varepsilon}\right)^2}
		\end{align}
		On the other hand, under the condition on $\alpha$ in \eqref{SAT1}, one has
		\begin{align}\label{proof_Lemma5_10_1}
			0<-B_u \alpha a+\tfrac{\delta_0}{2}-\sqrt{(B_u\alpha a+\tfrac{\delta_0}{2})^2}
			<-B_u \alpha a+\tfrac{\Delta x}{2\varepsilon}-\sqrt{(B_u\alpha a+\tfrac{\Delta x}{2\varepsilon})^2}
		\end{align}
		with $\Delta x\geq \delta_0\varepsilon$ and $\delta_0>0$.
		\\
		According to \eqref{proof_Lemma5_10_2} and \eqref{proof_Lemma5_10_1}, the inequality \eqref{result_lemma5} holds with $c$ such that
		\begin{align*}
			c\in\left(0, -B_u \alpha a+\tfrac{\delta_0}{2}-\sqrt{(B_u\alpha a+\tfrac{\delta_0}{2})^2}\right].
		\end{align*}
	\end{itemize}
	This ends the proof of Proposition \ref{Lemma5}.
\end{proof}

\begin{remark}
	The detailed proof above and the useful Lemmas in the appendix convincingly suggests that the restricted choice of the SAT-parameters in Definition~\ref{Definition4} is, in a sense, optimal (i.e. maximal) for ensuring the discrete strict dissipativity condition holds. However, there is no unique way to include the boundary condition through a discrete SAT-strategy as done here in the scheme~\eqref{NS}.
\end{remark}
%

\section{Numerical experiment}\label{Section_NE}
In this section, we present several numerical experiments. All experiments are conducted with the parameters $a=4$ over the space interval $x\in[0,2]$. Each initial data function $f$ we consider (nearly) vanishes outside the space interval $[0,0.7]$. Consequently, due to the characteristic velocities $\pm \sqrt{a}=\pm 2$, no non-trivial information reaches the right computational boundary until time $T=0.6$. For this reason, we restrict the computation of numerical solutions to the time domain $t\in[0,0.6]$. At the right point $x=2$, we impose a discrete boundary condition based on the first-order homogeneous Neumann boundary condition, namely $U_{J+1}(t)=U_{J}(t)$ where $J$ denotes the rightmost cell.

To avoid in-depth analysis of time integration for stiff ODEs and to stay within the scope of this work, we use a build-in time solver and handle the semi-discrete scheme~\eqref{NS}. The chosen time solver is LSODA, wich uses Adams/BDF method with automatic stiffness detection and switching, from the ODEPACK detailed in \cite{Hindmarsh83,Petzold83}. The fully discrete implicit scheme~\eqref{NS_full} is not used since the numerical computation on a bounded interval will inescapably couple the whole space domain at each time step and thus miss the stiff stability analysis done on the half-line \cite{InglardLagoutiereRugh20}.

Finally, the SAT-parameter $\Phi=(\alpha,\beta)^T$ in~\eqref{NS} satisfies the conditions~\eqref{SAT1}–\eqref{SAT2} of Definition~\ref{Definition4}. In most of the illustrative numerical experiments, the boundary parameters satisfy the strict dissipativity condition~\eqref{DSDC} with $B_u>0$ and $B_v\neq 0$. A very few tests will be conducted outside that condition. For the sake of reproducibility in the experiments, we mention that the values for the SAT-parameter $(\alpha,\beta)$ are systematically chosen so as to fulfil the requirements in Definition~\ref{Definition4}, as follows:
\[
	\alpha = (3+2\sqrt{2})\min(B_v^{-1},0) - 2,\qquad \beta = -a(1-B_v\alpha)B_u^{-1}.
\]

\subsection{Energy behaviour}

We first demonstrate the efficiency of the discrete strict dissipativity condition \eqref{DSDC}. Specifically, we observe the time evolution of the discrete energy $E(t)=\langle U(t),HU(t)\rangle_{\Delta x}$ which depends on the values of $\varepsilon$ and the boundary parameter $(B_u,B_v)$. The following initial and boundary data are used:
\begin{equation}
	f(x) = \begin{pmatrix}15\\10\end{pmatrix} \chi_{(0,1/2]}(x),\qquad b(t)=0,
\end{equation}
along with the space step $\Delta x= 5.10^{-3}$, which corresponds to $J = 2/\Delta x = 400$ cells.
We consider two regimes $\varepsilon=10^{-2}$ and $\varepsilon=10^2$, and for each, a corresponding set of values $(B_u,B_v)$ is chosen according to the discrete strict dissipativity condition \eqref{DSDC}. The only where the condition \eqref{DSDC} is not satisfied is when $\varepsilon=10^2$ and $(B_u,B_v)=(20,-1)$. The evolution of the energy $E(t)$ over the time period $t \in [0,0.6]$ is shown in Figure~\ref{Figure1}.
\begin{figure}[ht]
	\begin{center}
		\includegraphics[scale=.38,clip,trim=30 20 50 50]{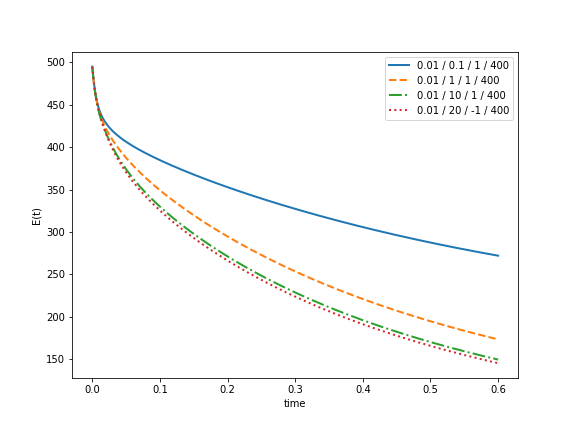}
		\includegraphics[scale=.38,clip,trim=30 20 50 50]{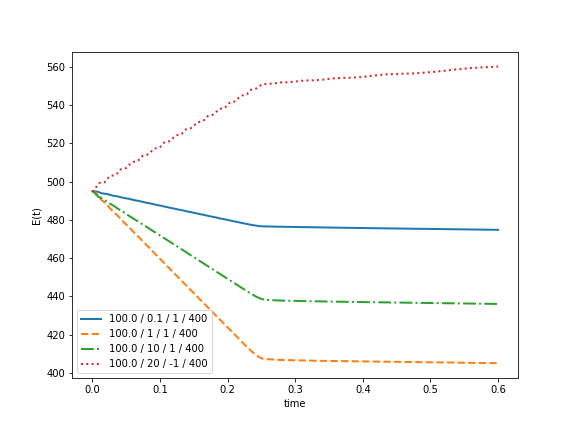}
		\caption{Evolution of the energy $E(t)$ for $\varepsilon=10^{-2}$ (left) and $\varepsilon=10^2$ (right). The legends are the parameters $\varepsilon / B_u / B_v / J$.}
		\label{Figure1}
	\end{center}
\end{figure}
Let us now comment on these results.
\begin{itemize}
	\item In the proof of Theorem~\ref{main_results}, we observe that the energy $E(t)$ decreases for a vanishing boundary condition $b=0$, provided the parameters satisfy the discrete strict dissipativity condition~\eqref{DSDC}. The numerical experiments strongly support this observation, including the case when $B_uB_v<0$, such as $(B_u,B_v)=(20,-1)$ and $\varepsilon=10^{-2}$, but only when $\Delta x$ is sufficiently large relative to $\varepsilon$. The case $B_uB_v<0$ but without the strict dissipativity condition~\eqref{DSDC} is simulated using $(B_u,B_v)=(20,-1)$ and a large $\varepsilon=10^2$. In this scenario, the energy clearly increases at first, despite the vanishing data at the left boundary.
	\item For small values of $\varepsilon$, the energy $E(t)$ decreases exponentially fast over short times due to the initial relaxation towards equilibrium. In the case of the larger value $\varepsilon=10^2$, the energy decay follows a linear behaviour, with the transition time $t=1/4$ corresponding to the time at which the non-trivial transported quantities exit the left boundary (part of which then returns to the interior domain). When dealing with the pathological case that does not satisfy~\eqref{DSDC}, the numerical solution exhibits a strong discrete reflected wave as long as physical quantities reach the left boundary. As a result, the energy increases within the time interval $[0,1/4]$.
\end{itemize}

\subsection{Convergence experiments}

We examine the behaviour of the solution for several values of the parameters $\varepsilon\in\{10^{-3},5.10^{-2},10^2\}$ and the space step $\Delta x\in\{0.04,0.02,0.01,0.005\}$. Throughout the simulations, the boundary condition for the exact IBVP is set to $(B_u,B_v)=(1,1)$. Therefore, the strict dissipativity condition~\eqref{DSDC} is always satisfied, with no restrictions on $\Delta x$ or $\varepsilon$. The physical initial data and boundary data are set to
\begin{equation}
	f(x) = \begin{pmatrix}2 e^{-100(0.5-x)^2}\\4 e^{-100(0.4-x)^2}\end{pmatrix},\qquad b(t)=5\sin^2(4\pi t).
\end{equation}
Figure~\ref{Figure2a1} illustrates the solutions in the space-time plane for the values of the relaxation parameter $\varepsilon\in\{10^{2},5.10^{-2},10^{-3}\}$ and with $\Delta x= 0.005$. The color field in the figure represents the magnitude of the unknown $u$, $v$, as well as the characteristic variables $\sqrt{a}u+v$ and $\sqrt{a}u-v$. Additionally, Figure~\ref{Figure2b} shows several quantities of interest: the unknowns $u$ and $v$ at the final time $t=0.6$, the energy $E(t)$ as a function of time, and the boundary quantities in the boundary cell $BU_0(t)-b(t)$ and in the first interior cell $BU_1(t)-b(t)$. Let us now discuss these results further.

\begin{itemize}
	\item For the largest value of $\varepsilon=10^2$ (Figure~\ref{Figure2a1}), the characteristic variables $\sqrt{a}u\pm v$ clearly evolve at the boundary according to the underlying hyperbolic structure, with outgoing and incoming waves, respectively. Meanwhile, the unknowns $u$ and $v$ follow the boundary condition~\eqref{I_3}. As the relaxation parameter $\varepsilon$ decreases, the influence of the hyperbolic structure diminishes, with $v$ vanishing and approaching the equilibrium value of zero, except in the small spatial layer near  the boundary $x=0$ (as seen in Figure~\ref{Figure2b}, particularly in the three plots on the second line).
	\item On each subplot of Figure~\ref{Figure2b}, several curves are plotted, corresponding to progressively smaller space steps $\Delta x$. The results show both the boundedness and the convergence of the plotted quantities. Specifically, the boundary condition $BU_0(t)-b(t)$ converges to zero (4th row), regardless of the value of $\varepsilon$. However, for small values of $\varepsilon$, the interaction between the relaxation layer and the boundary layer slows the convergence of the first cell quantity $BU_1(t)-b(t)$ (5th row), and in the relaxation limit, this convergence appears to vanish. The key issus here is related to the difficulty in establishing trace existence for the limiting characteristic problem.
\end{itemize}

\begin{figure}[!ht]
	\begin{center}
		\includegraphics[scale=.40,clip,trim=50 360 360 72]{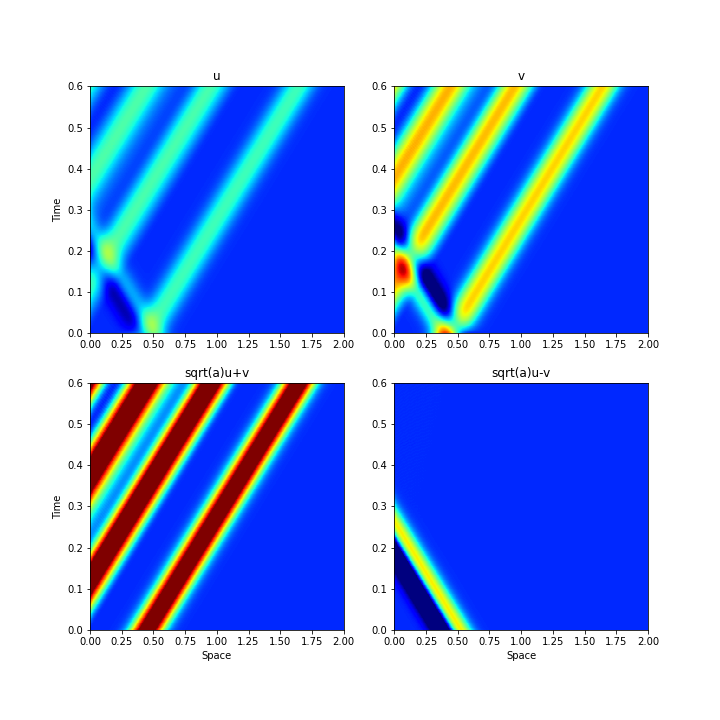}
		\includegraphics[scale=.40,clip,trim=50 360 360 72]{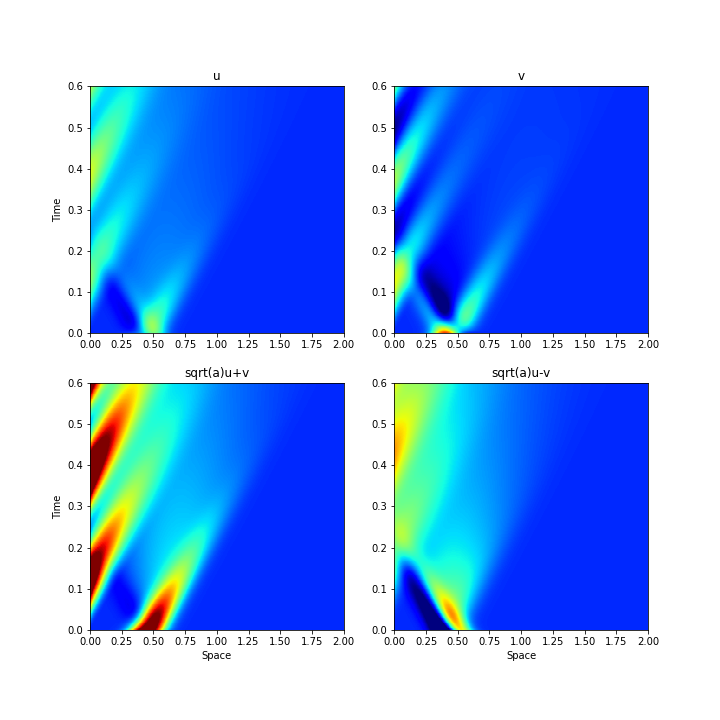}
		\includegraphics[scale=.40,clip,trim=50 360 360 72]{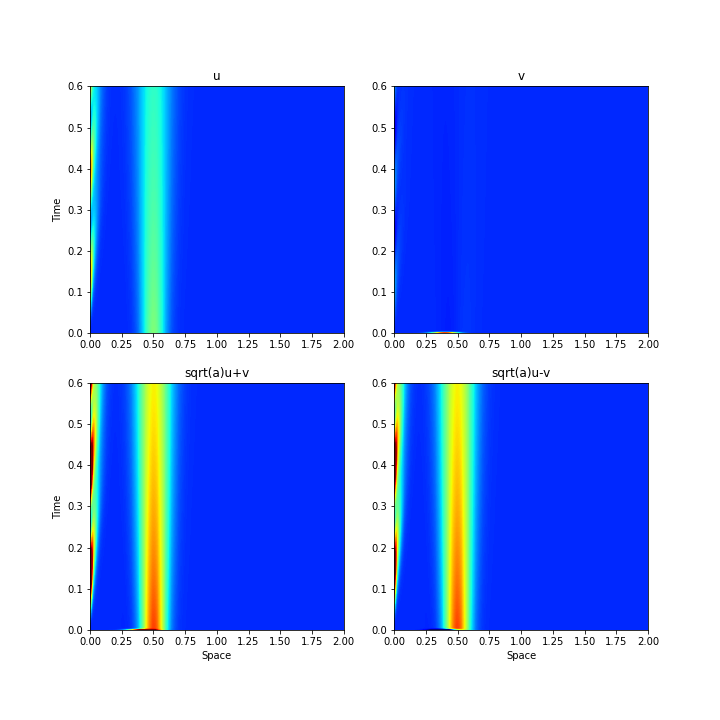}\\
		\includegraphics[scale=.40,clip,trim=355 360 55 72]{100.png}
		\includegraphics[scale=.40,clip,trim=355 360 55 72]{005.png}
		\includegraphics[scale=.40,clip,trim=355 360 55 72]{0001.png}\\
		\includegraphics[scale=.40,clip,trim=50 60 360 365]{100.png}
		\includegraphics[scale=.40,clip,trim=50 60 360 365]{005.png}
		\includegraphics[scale=.40,clip,trim=50 60 360 365]{0001.png}\\
		\includegraphics[scale=.40,clip,trim=355 60 55 365]{100.png}
		\includegraphics[scale=.40,clip,trim=355 60 55 365]{005.png}
		\includegraphics[scale=.40,clip,trim=355 60 55 365]{0001.png}
		\caption{Space-time behaviour of $u$, $v$, $\sqrt{a}u+v$ and $\sqrt{a}u-v$ (from top to bottom). Relaxation parameter: $\varepsilon=10^{2}$, $\varepsilon=5.10^{-2}$, $\varepsilon = 10^{-3}$ (from left to right). Fixed parameters: $J=800$. $B=(1,1)^T$.}
		\label{Figure2a1}
	\end{center}
\end{figure}

\begin{figure}[!hp]
	\begin{center}
		\includegraphics[scale=.25,clip,trim=20 20 50 50]{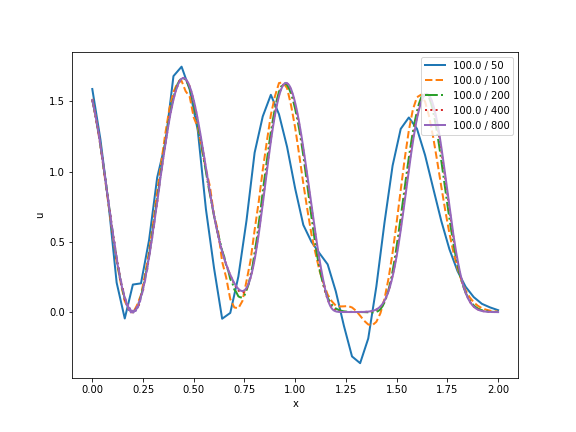}
		\includegraphics[scale=.25,clip,trim=20 20 50 50]{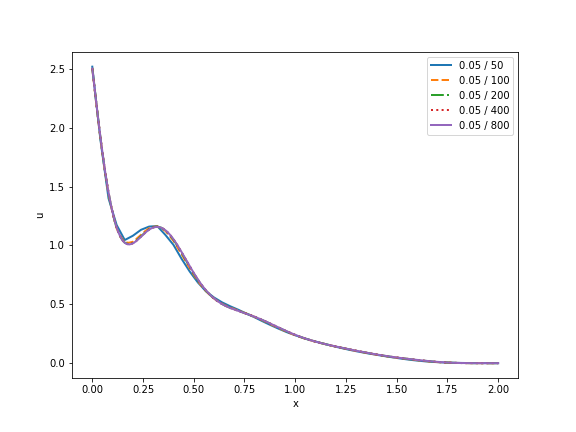}
		\includegraphics[scale=.25,clip,trim=20 20 50 50]{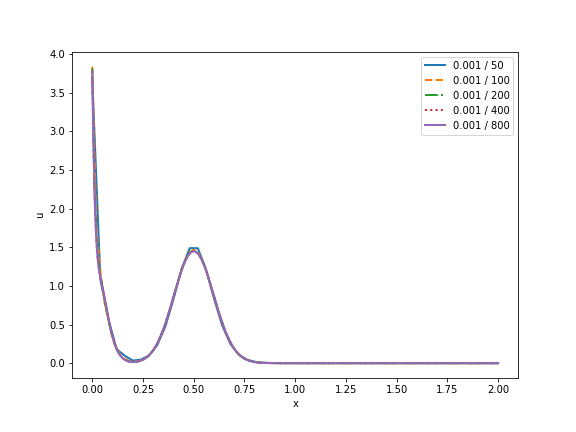}\par
		\includegraphics[scale=.25,clip,trim=20 20 50 50]{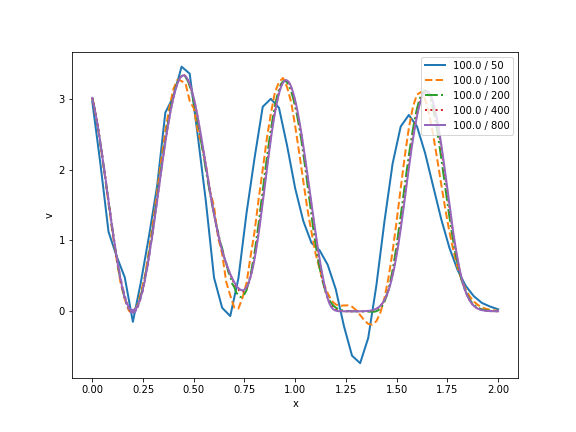}
		\includegraphics[scale=.25,clip,trim=20 20 50 50]{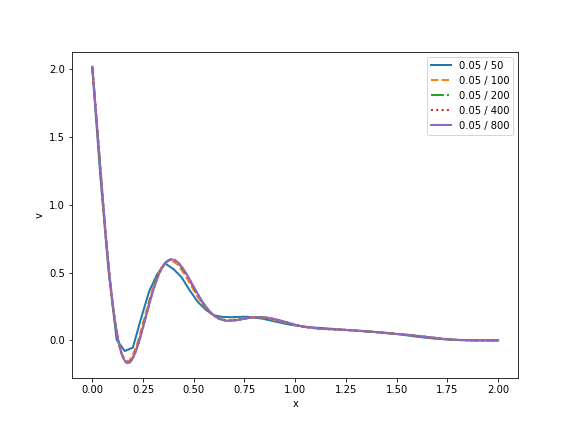}
		\includegraphics[scale=.25,clip,trim=20 20 50 50]{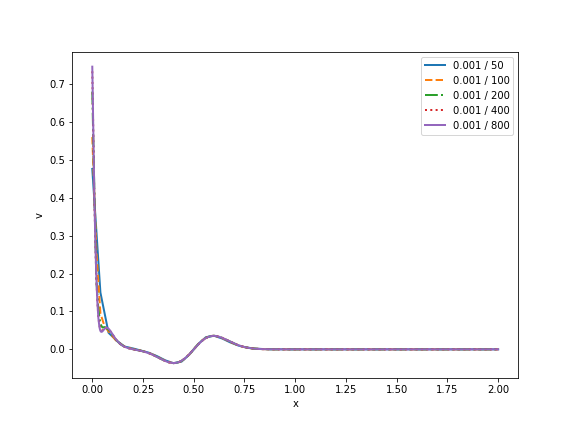}\par
		\includegraphics[scale=.25,clip,trim=20 20 50 50]{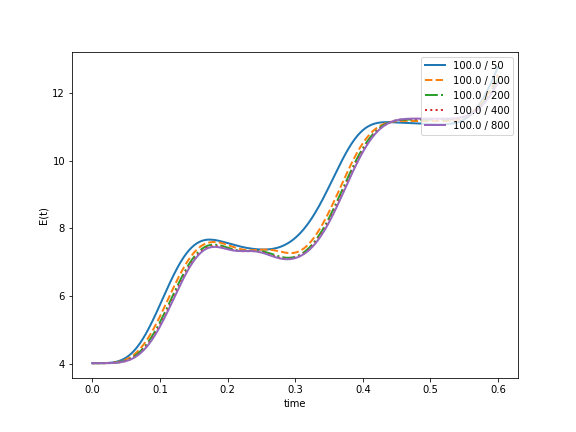}%
		\includegraphics[scale=.25,clip,trim=20 20 50 50]{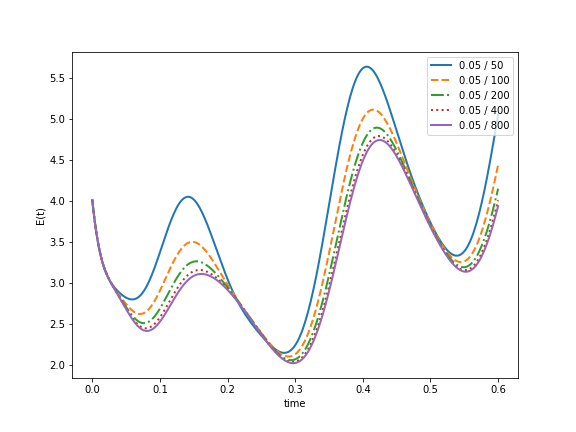}
		\includegraphics[scale=.25,clip,trim=20 20 50 50]{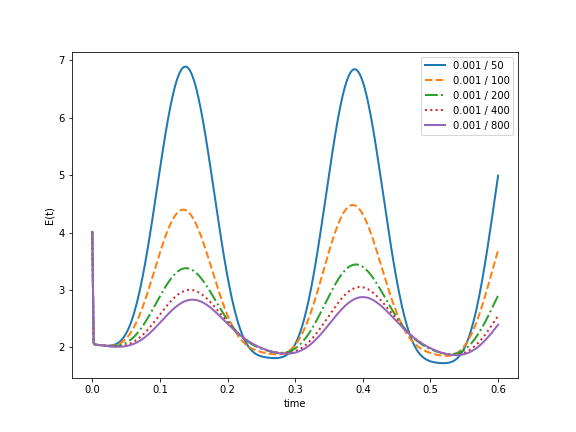}\par
		\includegraphics[scale=.25,clip,trim=20 20 50 50]{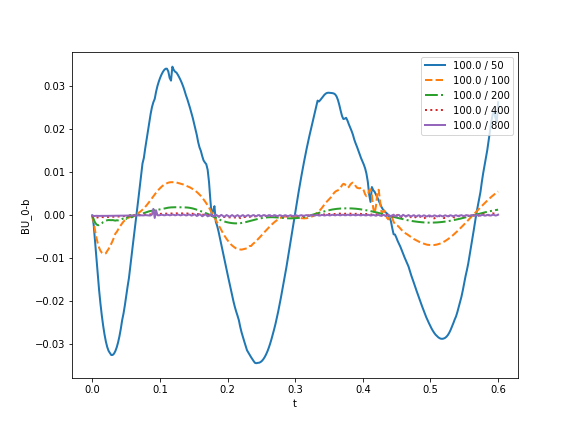}%
		\includegraphics[scale=.25,clip,trim=20 20 50 50]{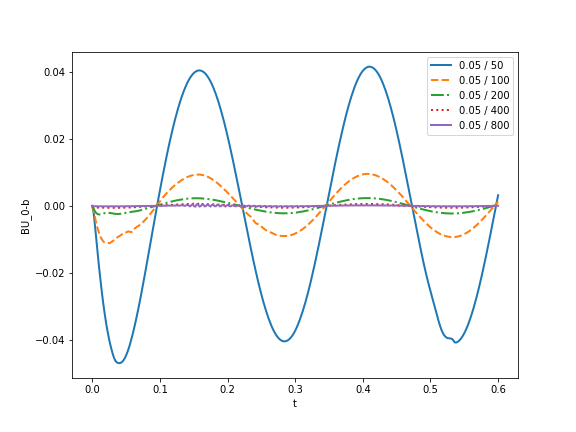}
		\includegraphics[scale=.25,clip,trim=20 20 50 50]{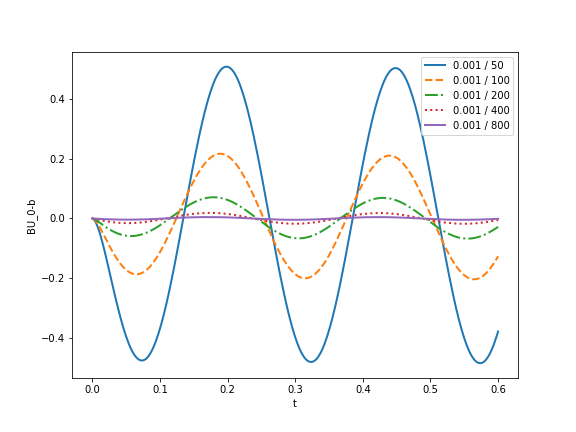}\par
		\includegraphics[scale=.25,clip,trim=20 20 50 50]{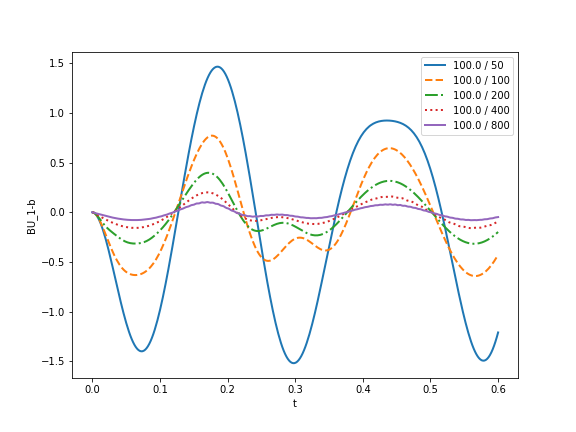}%
		\includegraphics[scale=.25,clip,trim=20 20 50 50]{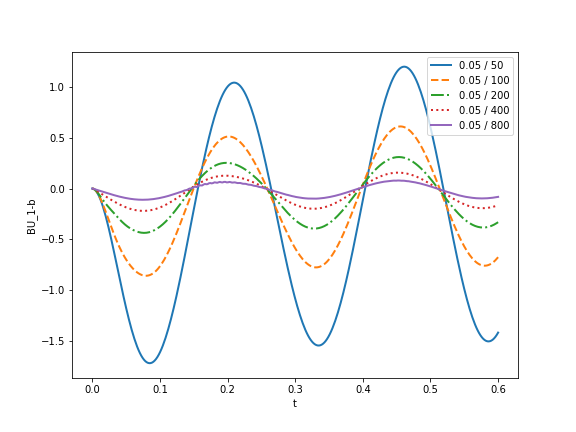}
		\includegraphics[scale=.25,clip,trim=20 20 50 50]{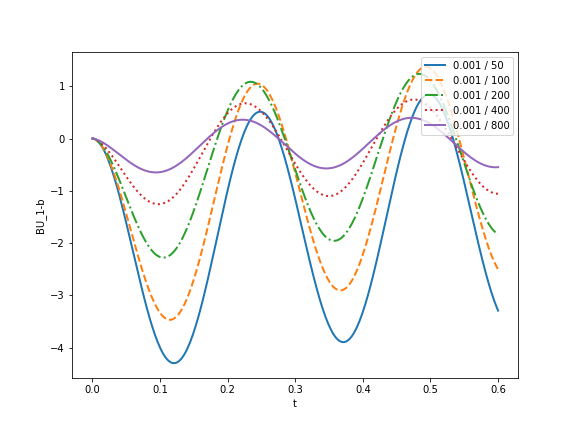}
		\caption{In the legends are the parameters $\varepsilon / J$. Fixed boundary parameter: $B=(1,1)^T$. Representation at time $t=0.6$ of $u_j$ and $v_j$, then for $t\in[0,0.6]$ of $E(t)$, $BU_0(t)-b(t)$ and $BU_1(t)-b(t)$ (from top to bottom). Relaxation parameters: $\varepsilon=10^{2}$, $\varepsilon=5.10^{-2}$, $\varepsilon = 10^{-3}$ (from left to right).}
		\label{Figure2b}
	\end{center}
\end{figure}

\clearpage
\appendix
\section{Technical lemmas}\label{Sec:Technical}
\begin{lm}\label{Lemma1_appendix}
	Let $B_u>0$, $B_v\neq 0$, $\Delta x\in (0,1]$, and $\varepsilon>0$.
	Let $(\alpha,\beta)$ be a SAT-parameter in the sense of Definition~\ref{Definition4}. 
	Then,
	\begin{align}\label{Result_Lemma1_Appendix}
		B_v\beta<0.
	\end{align}
\end{lm}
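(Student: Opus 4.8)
The plan is to reduce the claim to determining the sign of $\beta$ and then to read that sign off directly from the window constraint~\eqref{SAT2}. Write $\theta:=B_v\alpha$. Since $B_u>0$ and $B_v\neq0$, the inequality $B_v\beta<0$ is equivalent to: $\beta B_u<0$ when $B_v>0$, and $\beta B_u>0$ when $B_v<0$. In each case~\eqref{SAT2} traps $\beta B_u$ strictly between the two endpoints $-a(1-\theta)\mp 2a\sqrt{|\theta|}$, so it suffices to show that the \emph{upper} endpoint is $\le 0$ (when $B_v>0$) or that the \emph{lower} endpoint is $\ge 0$ (when $B_v<0$). The whole argument then rests on the elementary identity
\[
	-a(1-\theta)\pm 2a\sqrt{|\theta|}\;=\;\mp a\bigl(\sqrt{|\theta|}\mp 1\bigr)^2\quad\text{(with the appropriate sign of }\theta\text{)},
\]
combined with the sign information on $\theta$ furnished by~\eqref{SAT1}. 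Notably, none of the structural hypotheses on $\Delta x$ or $\varepsilon$ are used.

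First I would treat $B_v>0$. Then $\min(B_v^{-1},0)=0$, so~\eqref{SAT1} gives $\alpha<0$, hence $\theta=B_v\alpha<0$ and $|\theta|=-\theta$. Setting $r:=\sqrt{|\theta|}>0$, the upper endpoint in~\eqref{SAT2} is
\[
	-a(1-\theta)+2a\sqrt{|\theta|}=-a\bigl(1+r^2-2r\bigr)=-a(r-1)^2\le 0,
\]
so $\beta B_u<-a(r-1)^2\le 0$, which forces $\beta B_u<0$ and therefore $B_v\beta<0$.

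Next I would treat $B_v<0$. Here $\min(B_v^{-1},0)=B_v^{-1}$, and multiplying $\alpha<(3+2\sqrt2)B_v^{-1}$ by the negative number $B_v$ reverses the inequality, giving $\theta=B_v\alpha>3+2\sqrt2>0$, so $|\theta|=\theta$. Writing $r:=\sqrt{\theta}$ and using that $3+2\sqrt2=(1+\sqrt2)^2$ — which is precisely why this constant was placed in Definition~\ref{Definition4} — we get $r>1+\sqrt2$, and the lower endpoint in~\eqref{SAT2} is
\[
	-a(1-\theta)-2a\sqrt{|\theta|}=a\bigl(r^2-2r-1\bigr)=a\bigl(r-1-\sqrt2\bigr)\bigl(r-1+\sqrt2\bigr)>0,
\]
both factors being positive for $r>1+\sqrt2$. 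Hence $\beta B_u>0$, so $\beta>0$ and again $B_v\beta<0$.

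I do not expect a genuine obstacle: the only delicate points are keeping track of the direction of the inequality when multiplying by $B_v<0$ in the second case, and noticing that the relevant endpoint of the window~\eqref{SAT2} switches from the upper one (for $B_v>0$) to the lower one (for $B_v<0$). The apparent cleverness — the appearance of $3+2\sqrt2$ — is entirely explained by the completion of the square $r^2-2r-1=(r-1-\sqrt2)(r-1+\sqrt2)$, so once the case split is set up the computation is immediate.
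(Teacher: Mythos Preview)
Your proof is correct and follows essentially the same route as the paper: in each case you bound $\beta B_u$ by the relevant endpoint of the window~\eqref{SAT2} and check its sign, which is exactly what the paper does (the paper squares and solves the resulting quadratic in $B_v\alpha$, whereas your substitution $r=\sqrt{|B_v\alpha|}$ lets you factor directly). One cosmetic remark: the displayed ``elementary identity'' in your overview paragraph is not literally valid for both sign choices simultaneously, but your subsequent case-by-case computations are correct and do not rely on it.
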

\begin{proof}
	\ \\[-1em]
	\begin{itemize}
		\item Let us first consider the case $B_v>0$. Then, the required inequality \eqref{Result_Lemma1_Appendix} is equivalent to $\beta<0$.
		To prove that, from the condition on $\beta$ in \eqref{SAT2}, we check that
		%
		$	-a(1-B_v\alpha)+2a\sqrt{\vert B_v\alpha\vert}\leq 0$,
		or equivalently that
		\begin{align}\label{Proof_Lemma1_appendix2}
			\begin{split}
				& 2\sqrt{\vert B_v\alpha\vert}\leq 1-B_v\alpha.
			\end{split}
		\end{align}
		Otherwise, according to the condition on $\alpha$ in \eqref{SAT1}, we get $\alpha<0$ 
		and then 
		$1-B_v\alpha>0$.
		Therefore, the inequality \eqref{Proof_Lemma1_appendix2} is now equivalent to
		%
			$-4 B_v\alpha \leq (1-B_v\alpha)^2$,
		%
		and then to the trivial inequality
		$(1+B_v\alpha)^2 \geq 0$.
		So
		~\eqref{Result_Lemma1_Appendix} holds. 
		\medbreak
		
		\item Secondly, we consider the case $B_v<0$. Then, the required inequality \eqref{Result_Lemma1_Appendix} is equivalent to $\beta>0$. Again to prove that, from the condition on $\beta$ in \eqref{SAT2}, we check that
		%
		%
			$-a(1-B_v\alpha)-2a\sqrt{\vert B_v\alpha\vert}\geq0$, 
		or equivalently that
		\begin{equation}\label{Proof_Lemma1_appendix11}
			2\sqrt{\vert B_v\alpha\vert}\leq-(1-B_v\alpha).
		\end{equation}
		Otherwise, according to the condition on $\alpha$ in \eqref{SAT1}, we get  $\alpha<(3+2\sqrt{2})B_v^{-1}<0$ 
		and then 
		$\alpha B_v>1$.
		Since $B_v<0$ and the previous property,
		the inequality \eqref{Proof_Lemma1_appendix11} is now equivalent to
		%
			$4B_v\alpha\leq (1-B_v\alpha)^2$, 
		or then to 
		%
			$B_v^2\alpha^2-6B_v\alpha +1\geq 0$.
		%
		It holds true if and only if
		$\alpha\in\left(
		-\infty,(3+2\sqrt{2})B_v^{-1}
		\right]\cup\left[
		(3-2\sqrt{2})B_v^{-1},+\infty
		\right).$
		From the condition on $\alpha$ in \eqref{SAT1}, $\alpha$ belongs to the first set. So, we 
		can conclude that~\eqref{Result_Lemma1_Appendix} also holds.
	\end{itemize}
	This ends the proof of Lemma \ref{Lemma1_appendix}.
\end{proof}
\begin{lm}\label{Lemma3_appendix}
	Let $B_u>0$, $B_v>0$, $\Delta x\in (0,1]$, and $\varepsilon>0$.
	Let $\alpha$ 
	be as~\eqref{SAT1} in 
	Definition~\ref{Definition4}.
	Assume that the parameters satisfy the discrete strict dissipativity condition~\eqref{DSDC}. 
	Then
	\begin{align}\label{result_Lemma3_appendix}
		-a(1-B_v\alpha)+\sqrt{-2a\left(2aB_v+\tfrac{\Delta x}{\varepsilon}B_u\right)\alpha}
		<-\tfrac{aB_u^2}{B_v}\alpha+\tfrac{B_u}{B_v}\tfrac{\Delta x}{2\varepsilon} .
	\end{align}
\end{lm}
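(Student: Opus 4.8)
The plan is to establish Lemma~\ref{Lemma3_appendix} by reducing the asserted inequality to a tractable algebraic form and then invoking the discrete strict dissipativity condition~\eqref{DSDC} together with the sign information on $\alpha$ from~\eqref{SAT1}. First I would note that since $B_v>0$ and $\alpha<0$ (which follows from~\eqref{SAT1} because $(3+2\sqrt2)\min(B_v^{-1},0)=0$ would fail; more precisely $\alpha<(3+2\sqrt2)B_v^{-1}<0$), every term of the form $-B_v\alpha$, $-B_u\alpha$, etc. is positive. Multiplying the target inequality~\eqref{result_Lemma3_appendix} through by $B_v>0$ and rearranging, one wants to show
\[
	B_v\sqrt{-2a\left(2aB_v+\tfrac{\Delta x}{\varepsilon}B_u\right)\alpha}
	< aB_v - aB_v^2\alpha - aB_u^2\alpha + \tfrac{B_u}{2}\tfrac{\Delta x}{\varepsilon}
	= a(B_v - (B_v^2+B_u^2)\alpha) + \tfrac{B_u}{2}\tfrac{\Delta x}{\varepsilon}.
\]
Note the radicand is nonnegative exactly because~\eqref{DSDC} gives $2aB_v+\tfrac{\Delta x}{\varepsilon}B_u>0$ and $\alpha<0$, so the left-hand side is well defined; moreover the right-hand side is manifestly positive. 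Hence the inequality is equivalent, after squaring, to a polynomial inequality in the quantities $\alpha$, $B_u$, $B_v$, $a$, and $t:=\tfrac{\Delta x}{\varepsilon}$.

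The key step is then to show that
\[
	B_v^2\cdot\bigl(-2a(2aB_v+tB_u)\alpha\bigr)
	\le \Bigl(a(B_v-(B_v^2+B_u^2)\alpha)+\tfrac{tB_u}{2}\Bigr)^2,
\]
and here the natural approach is to exhibit the right-hand side minus the left-hand side as a sum of squares (up to positive factors). I would expand both sides as polynomials in $\alpha$, treating $a,B_u,B_v,t$ as positive parameters, and collect the result as a quadratic $P(\alpha)=p_2\alpha^2+p_1\alpha+p_0$. One expects $p_2=a^2(B_v^2+B_u^2)^2>0$, $p_0=\bigl(aB_v+\tfrac{tB_u}{2}\bigr)^2>0$, and the cross term $p_1$ to combine the contributions so that the discriminant $p_1^2-4p_2p_0$ is $\le 0$; this would make $P(\alpha)\ge 0$ for all real $\alpha$, which is more than enough. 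If the discriminant does not vanish identically, I would instead use the specific sign $\alpha<0$ together with~\eqref{DSDC} to bound $p_1\alpha$ from below — e.g. isolating the terms that involve $t$ and using $2aB_v+tB_u>0$ to control them.

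The main obstacle I anticipate is the bookkeeping in the squaring step: the right-hand side contains a product of the $a$-linear part with the $t$-linear part, producing a cross term $atB_u(B_v-(B_v^2+B_u^2)\alpha)$ whose sign (given $\alpha<0$, so $B_v-(B_v^2+B_u^2)\alpha>0$) is positive and therefore helpful, while on the left-hand side the factor $tB_u$ appears multiplied by $-2aB_v^2\alpha>0$, also positive. Matching these and verifying that the leftover terms assemble into something nonnegative is where the condition~\eqref{DSDC} should enter decisively rather than cosmetically; I would keep $2aB_v+tB_u$ as a single positive block throughout to avoid losing that structure. Once the polynomial inequality is in hand, reversing the squaring is immediate since both sides were already checked to be nonnegative, and the strictness of~\eqref{result_lemma5} — pardon, of~\eqref{result_Lemma3_appendix} — follows because the radicand is strictly controlled whenever $\alpha\neq 0$, which holds here.
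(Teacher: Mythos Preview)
Your approach is essentially identical to the paper's: multiply through by $B_v>0$, verify the right-hand side is positive (using $\alpha<0$ and~\eqref{DSDC}), square, and show the resulting quadratic in $\alpha$ has negative discriminant --- indeed the paper computes $\Delta_1=-4a^2B_u^2B_v^2\bigl(2aB_v+\tfrac{\Delta x}{\varepsilon}B_u\bigr)^2<0$, confirming your expectation that $p_1^2-4p_2p_0<0$ and making your fallback plan unnecessary. One small slip: for $B_v>0$ the condition~\eqref{SAT1} reads $\alpha<(3+2\sqrt2)\min(B_v^{-1},0)=0$, not $\alpha<(3+2\sqrt2)B_v^{-1}$ (which would be a positive bound), but your conclusion $\alpha<0$ is correct regardless.
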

\begin{proof}
	Since we assume $B_v>0$, the required inequality \eqref{result_Lemma3_appendix} also reads
	\begin{align*}
		B_v\sqrt{-2a\left(2aB_v+\tfrac{\Delta x}{\varepsilon}B_u\right)\alpha}
		<-aB_u^2\alpha+\tfrac{\Delta x}{2\varepsilon}B_u+aB_v(1-B_v\alpha),
	\end{align*}
	and	is equivalent to the following system
	\begin{align}\label{proof_lemma3_appendix1}
		\begin{cases}
			-aB_u^2\alpha+\tfrac{\Delta x}{2\varepsilon}B_u+aB_v(1-B_v\alpha)>0
			\\
			-2aB_v^2\left(2aB_v+\tfrac{\Delta x}{\varepsilon}B_u\right)\alpha
			<\left(
			-aB_u^2\alpha+\tfrac{\Delta x}{2\varepsilon}B_u+aB_v(1-B_v\alpha)
			\right)^2.
		\end{cases}
	\end{align}
	Firstly, we consider the second inequality in \eqref{proof_lemma3_appendix1}. It can be equivalently reformulated as
	\begin{align}\label{proof_lemma3_appendix2}
		\begin{split}
			a^2\left(B_u^2+B_v^2\right)^2\alpha^2-a\left(2aB_v+\tfrac{\Delta x}{\varepsilon}B_u\right)(B_u^2-B_v^2)\alpha +\left(\tfrac{\Delta x}{2\varepsilon}B_u+aB_v\right)^2>0.
		\end{split}
	\end{align}
	Looking at the quadratic polynomial in $\alpha$, let us define
	\begin{align*}
		\Delta_1:&=a^2\left(2aB_v+\tfrac{\Delta x}{\varepsilon}B_u\right)^2(B_u^2-B_v^2)^2
		-4a^2\left(B_u^2+B_v^2\right)^2\left(\tfrac{\Delta x}{2\varepsilon}B_u+aB_v\right)^2
		\\
		&=-4a^2B_u^2B_v^2\left(2aB_v+\tfrac{\Delta x}{\varepsilon}B_u\right)^2.
	\end{align*}
	Since $ a^2\left(B_u^2+B_v^2\right)^2>0$ and $\Delta_1<0$, the inequality \eqref{proof_lemma3_appendix2} holds for all $\alpha\in \mathbb{R}$.
	\\
	Secondly, we consider the first inequality in \eqref{proof_lemma3_appendix1}. It can be equivalently reformulated as
	\begin{align*}
		\begin{split}
			a\left(B_u^2+B_v^2\right)\alpha <
			aB_v+\tfrac{\Delta x}{2\varepsilon}B_u.
		\end{split}
	\end{align*}
	This is equivalent to
	\begin{align}\label{proof_lemma3_appendix3}
		\alpha <\dfrac{aB_v+\tfrac{\Delta x}{2\varepsilon}B_u}{a(B_u^2+B_v^2)}.
	\end{align}
	On the one side, the strict dissipativity assumption implies that the right-hand side is positive. On the other side, from the condition on $\alpha$ in \eqref{SAT1}, one has $\alpha<0$ and thus~\eqref{proof_lemma3_appendix3} holds. 
	\\
	This ends the proof of Lemma~\ref{Lemma3_appendix}.
\end{proof}

\begin{lm}\label{Lemma4_appendix}
	Let $B_u>0$, $B_v\neq 0$, $\Delta x\in (0,1]$, and $\varepsilon>0$.
	Let $(\alpha,\beta)$ be a SAT-parameter in the sense of Definition~\ref{Definition4}.
	Assume that the parameters satisfy the discrete strict dissipativity condition~\eqref{DSDC}. 
	Then,
	\begin{align}\label{result_Lemma4_Appendix1}
		\sqrt{2a\vert B_v\alpha\vert}
		<	\sqrt{-\left(2aB_v+\tfrac{\Delta x}{\varepsilon}B_u\right)\alpha}.
	\end{align}
	More precisely, 
	\begin{itemize}
		\item[a)] If $B_v>0$, the inequality \eqref{result_Lemma4_Appendix1} holds uniformly.
		\item[b)] If $B_v<0$, the inequality \eqref{result_Lemma4_Appendix1} holds uniformly for $\varepsilon=O(\Delta x)$, i.e. being given $\delta_0>-4aB_u^{-1}B_v$, as soon as $\Delta x\geq \delta_0\varepsilon$.
	\end{itemize}

\end{lm}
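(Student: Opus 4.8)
The plan is to reduce the inequality~\eqref{result_Lemma4_Appendix1} to a sign condition on $\alpha$ and on the combination $2aB_v+\tfrac{\Delta x}{\varepsilon}B_u$, exploiting that both sides are square roots of quantities that are forced to be positive by the hypotheses. First I would note that by the condition on $\alpha$ in~\eqref{SAT1} one always has $\alpha<0$ (indeed $\alpha<(3+2\sqrt2)\min(B_v^{-1},0)\le 0$ when $B_v\neq 0$), so $|B_v\alpha|=-B_v\alpha$ if $B_v>0$ and $|B_v\alpha|=B_v\alpha$ if $B_v<0$; in either case $|\alpha|=-\alpha$. Moreover the strict dissipativity condition~\eqref{DSDC} says exactly that $2aB_v+\tfrac{\Delta x}{\varepsilon}B_u>0$, so $-(2aB_v+\tfrac{\Delta x}{\varepsilon}B_u)\alpha>0$ and the right-hand side of~\eqref{result_Lemma4_Appendix1} is a well-defined positive number. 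Since both sides are nonnegative, squaring is an equivalence, and~\eqref{result_Lemma4_Appendix1} becomes
\begin{equation*}
	2a|B_v\alpha| < -\left(2aB_v+\tfrac{\Delta x}{\varepsilon}B_u\right)\alpha = -2aB_v\alpha - \tfrac{\Delta x}{\varepsilon}B_u\alpha.
\end{equation*}

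Next I would split on the sign of $B_v$. \textbf{Case $B_v>0$.} Here $|B_v\alpha| = -B_v\alpha$, so the squared inequality reads $-2aB_v\alpha < -2aB_v\alpha - \tfrac{\Delta x}{\varepsilon}B_u\alpha$, i.e. $0 < -\tfrac{\Delta x}{\varepsilon}B_u\alpha$, which holds because $\tfrac{\Delta x}{\varepsilon}>0$, $B_u>0$ and $\alpha<0$; this bound is manifestly uniform in $(\Delta x,\varepsilon)$, giving item a). \textbf{Case $B_v<0$.} Here $|B_v\alpha| = B_v\alpha$ (since $\alpha<0$ and $B_v<0$, the product is positive), so the squared inequality reads $2aB_v\alpha < -2aB_v\alpha - \tfrac{\Delta x}{\varepsilon}B_u\alpha$, i.e. $4aB_v\alpha < -\tfrac{\Delta x}{\varepsilon}B_u\alpha$, and dividing by $-\alpha>0$ (note the reversal is \emph{not} triggered because $-\alpha>0$; dividing by a positive number), this is equivalent to $-4aB_v > \tfrac{\Delta x}{\varepsilon}B_u$... which would fail for $\Delta x/\varepsilon$ large. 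Wait — I must divide by $\alpha<0$, so the correct manipulation is: $4aB_v\alpha < -\tfrac{\Delta x}{\varepsilon}B_u\alpha \iff 4aB_v\alpha + \tfrac{\Delta x}{\varepsilon}B_u\alpha<0 \iff \alpha\bigl(4aB_v+\tfrac{\Delta x}{\varepsilon}B_u\bigr)<0$, and since $\alpha<0$ this is equivalent to $4aB_v+\tfrac{\Delta x}{\varepsilon}B_u>0$, i.e. $\tfrac{\Delta x}{\varepsilon} > -4aB_v/B_u = -4aB_u^{-1}B_v$. Thus under the hypothesis $\Delta x\ge\delta_0\varepsilon$ with $\delta_0>-4aB_u^{-1}B_v$ we get $\tfrac{\Delta x}{\varepsilon}\ge\delta_0>-4aB_u^{-1}B_v$ and the inequality holds, with a bound depending only on $\delta_0$; this is item b).

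The computation is short, so there is no serious obstacle; the one point requiring care is the bookkeeping of signs when clearing the square root and when dividing by $\alpha$, which is negative — the direction of the strict inequality must be tracked exactly, and the distinction between $B_v>0$ (where~\eqref{DSDC} is automatic and the conclusion is uniform) and $B_v<0$ (where~\eqref{DSDC} is the binding constraint and forces the lower bound on $\Delta x/\varepsilon$) is precisely what produces the dichotomy stated in items a) and b). I would also remark that $-4aB_u^{-1}B_v>0$ exactly when $B_v<0$, so the threshold $\delta_0$ is a genuine positivity constraint in that regime, consistent with~\eqref{DSDC}.
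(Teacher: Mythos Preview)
Your argument is correct and follows essentially the same route as the paper: square both sides (both radicands are positive under the hypotheses), use $\alpha<0$ from~\eqref{SAT1} to unpack $|B_v\alpha|$ according to the sign of $B_v$, and reduce to $B_u\alpha<0$ in case a) and to $\tfrac{\Delta x}{\varepsilon}>-4aB_u^{-1}B_v$ in case b). The only cosmetic issue is the mid-proof stumble where you divide by $-\alpha$ and momentarily write the wrong inequality before correcting yourself; in a final write-up you should just factor $\alpha(4aB_v+\tfrac{\Delta x}{\varepsilon}B_u)<0$ directly and conclude from $\alpha<0$.
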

\begin{proof}
	The required inequalities \eqref{result_Lemma4_Appendix1} can also be rephrased as
	\begin{align}\label{proof_Lemma4_Appendix1}
		\begin{split}
			&2a \vert B_v\alpha\vert <-\left(2aB_v+\tfrac{\Delta x}{\varepsilon}B_u\right)\alpha.
		\end{split}
	\end{align}
	\begin{itemize}
		\item[a)] Consider the case $B_v>0$. From~\eqref{SAT1}, one has $\alpha<0$ and thus
		$\vert B_v\alpha\vert =-B_v\alpha$. As a consequence, the inequality \eqref{proof_Lemma4_Appendix1} equivalently reads 
		\begin{align}\label{proof_Lemma4_Appendix3}
			-2aB_v\alpha<-\left(2aB_v+\tfrac{\Delta x}{\varepsilon}B_u\right)\alpha
			\Leftrightarrow B_u\alpha <0.
		\end{align}
		Since $B_u>0$
		, the inequality \eqref{proof_Lemma4_Appendix3} holds.
		\item[b)] Consider now the case $B_v< 0$.
		Again from~\eqref{SAT1}, one has $\alpha<0$ and thus
		$\vert B_v\alpha\vert =B_v\alpha$. As a consequence, the inequality  \eqref{proof_Lemma4_Appendix1}equivalently reads 
		\begin{align}\label{proof_Lemma4_Appendix7}
			2aB_v\alpha<-\left(2aB_v+\tfrac{\Delta x}{\varepsilon}B_u\right)\alpha
			\Leftrightarrow -4aB_u^{-1}B_v\varepsilon <\Delta x.
		\end{align}
		Considering $\delta_0>-4aB_u^{-1}B_v$ such that $\Delta x\geq \delta_0\varepsilon$,
		the inequality \eqref{proof_Lemma4_Appendix7} holds.
	\end{itemize}
	This ends the proof of Lemma \ref{Lemma4_appendix}.\hspace{10cm}
\end{proof}


\begin{lm}\label{Lemma5_appendix}
	Let $B_u>0$, $B_v<0$, $\Delta x\in (0,1]$, and $\varepsilon>0$.
	Let $\alpha$ be defined as in Definition~\ref{Definition4}.
	Assume that the parameters satisfy the discrete strict dissipativity condition~\eqref{DSDC}. 
	Then,
	\begin{align}\label{result_Lemma5_appendix}
		-\tfrac{aB_u^2}{B_v}\alpha +\tfrac{B_u}{B_v}\tfrac{\Delta x}{2\varepsilon}
		<-a(1-B_v\alpha)-\sqrt{-2a\left(2aB_v+\tfrac{\Delta x}{\varepsilon}B_u\right)\alpha}.
	\end{align}
\end{lm}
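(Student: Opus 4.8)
The inequality~\eqref{result_Lemma5_appendix} involves a square root on the right-hand side, so the natural first move is to isolate the radical. Since $B_v<0$, dividing by $B_v$ flips the inequality; after clearing denominators (multiplying by $B_v$, keeping track of the sign change) the target becomes equivalent to
\begin{align*}
	\sqrt{-2a\left(2aB_v+\tfrac{\Delta x}{\varepsilon}B_u\right)\alpha}
	< -aB_u^2\alpha + \tfrac{\Delta x}{2\varepsilon}B_u + aB_v(1-B_v\alpha),
\end{align*}
that is, the very same type of reduction carried out in Lemma~\ref{Lemma3_appendix}, but now with the \emph{opposite} sign in front of the radical (here we need a lower bound of the right-hand side rather than an upper bound). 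Because the strict dissipativity condition~\eqref{DSDC} guarantees $2aB_v+\tfrac{\Delta x}{\varepsilon}B_u>0$ and $\alpha<0$ by~\eqref{SAT1}, the quantity under the square root is nonnegative, so squaring is legitimate once we know the right-hand side above is positive.

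First I would check positivity of the right-hand side: this is exactly the first inequality in the system~\eqref{proof_lemma3_appendix1}, namely $a(B_u^2+B_v^2)\alpha < aB_v + \tfrac{\Delta x}{2\varepsilon}B_u$, which holds because the right member is positive by~\eqref{DSDC} while $\alpha<0$. Then I would square both sides. The resulting polynomial inequality in $\alpha$ is, up to rearrangement, identical to~\eqref{proof_lemma3_appendix2}:
\begin{align*}
	a^2(B_u^2+B_v^2)^2\alpha^2 - a\left(2aB_v+\tfrac{\Delta x}{\varepsilon}B_u\right)(B_u^2-B_v^2)\alpha + \left(\tfrac{\Delta x}{2\varepsilon}B_u+aB_v\right)^2 > 0,
\end{align*}
whose discriminant $\Delta_1 = -4a^2B_u^2B_v^2\left(2aB_v+\tfrac{\Delta x}{\varepsilon}B_u\right)^2$ is strictly negative under~\eqref{DSDC}, so the quadratic is positive for all real $\alpha$. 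Thus the squared inequality holds unconditionally, and combined with the positivity of the right-hand side this yields~\eqref{result_Lemma5_appendix}.

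The main obstacle — though it is more bookkeeping than genuine difficulty — is getting the sign manipulations right: one passes from a statement about $-\tfrac{aB_u^2}{B_v}\alpha + \tfrac{B_u}{B_v}\tfrac{\Delta x}{2\varepsilon}$ with $B_v<0$ in the denominator, and the radical carries a minus sign, so the equivalence with the squared system requires that \emph{both} the positivity condition and the correct direction of the inequality after multiplication by $B_v<0$ be tracked carefully. Once the reduction to~\eqref{proof_lemma3_appendix2} is made transparent, the proof is essentially a pointer to the computation already done in Lemma~\ref{Lemma3_appendix}, with the observation that the sign of the radical is immaterial to the squared inequality and only affects the (here automatically satisfied) positivity requirement.
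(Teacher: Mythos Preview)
Your overall plan is exactly the paper's: multiply through by $B_v<0$, check positivity of the right-hand side via~\eqref{DSDC} and $\alpha<0$, then square and show the resulting quadratic in $\alpha$ has negative discriminant. However, there is a slip in your reduction that, ironically, falls exactly where you flagged the ``main obstacle''.

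When you multiply the original inequality by $B_v$, the radical on the right picks up the factor $B_v$, so after rearranging you should obtain
\[
-B_v\,\sqrt{-2a\bigl(2aB_v+\tfrac{\Delta x}{\varepsilon}B_u\bigr)\alpha}
< -aB_u^2\alpha + \tfrac{\Delta x}{2\varepsilon}B_u + aB_v(1-B_v\alpha),
\]
with the positive coefficient $-B_v$ in front of the square root, not just the bare radical. This matters: squaring your displayed inequality (without the $-B_v$) gives, after rearrangement,
\[
a^2(B_u^2+B_v^2)^2\alpha^2 - a\bigl(2aB_v+\tfrac{\Delta x}{\varepsilon}B_u\bigr)\bigl(B_u^2+B_v^2-2\bigr)\alpha + \bigl(\tfrac{\Delta x}{2\varepsilon}B_u+aB_v\bigr)^2 > 0,
\]
whose discriminant is $4a^2\bigl(2aB_v+\tfrac{\Delta x}{\varepsilon}B_u\bigr)^2(1-B_u^2-B_v^2)$, which need not be negative (take, e.g., $B_u=B_v^{-1}=\tfrac12$ in absolute value). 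So the squared inequality you wrote is \emph{not}~\eqref{proof_lemma3_appendix2} and does not follow from the discriminant computation you cite.

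The fix is immediate and is precisely what the paper does: keep the $-B_v$ factor, square to get $B_v^2$ times the radicand on the left, and \emph{then} the linear coefficient becomes $-a\bigl(2aB_v+\tfrac{\Delta x}{\varepsilon}B_u\bigr)(B_u^2-B_v^2)$, yielding exactly~\eqref{proof_lemma3_appendix2} with discriminant $\Delta_1=-4a^2B_u^2B_v^2\bigl(2aB_v+\tfrac{\Delta x}{\varepsilon}B_u\bigr)^2<0$. Your positivity check of the right-hand side is correct as stated.
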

\begin{proof}
	Since we assume $B_v<0$, the required inequality \eqref{result_Lemma5_appendix} also reads
	\begin{align*}
		-B_v\sqrt{-2a\left(2aB_v+\tfrac{\Delta x}{\varepsilon}B_u\right)\alpha}<-aB_u^2\alpha+\tfrac{\Delta x}{2\varepsilon}B_u+aB_v(1-B_v\alpha),
	\end{align*}
	and is equivalent to the following system
	\begin{align}\label{proof_lemma5_appendix1}
		\begin{cases}
			-aB_u^2\alpha+\tfrac{\Delta x}{2\varepsilon}B_u+aB_v(1-B_v\alpha)>0
			\\
			-2aB_v^2\left(2aB_v+\tfrac{\Delta x}{\varepsilon}B_u\right)\alpha
			<\left(
			-aB_u^2\alpha+\tfrac{\Delta x}{2\varepsilon}B_u+aB_v(1-B_v\alpha)
			\right)^2.
		\end{cases}
	\end{align}
	Firstly, we consider the second inequality in \eqref{proof_lemma5_appendix1}. It can be equivalently reformulated  as
	\begin{align}\label{proof_lemma5_appendix2}
		\begin{split}
			a^2\left(B_u^2+B_v^2\right)^2\alpha^2-a\left(2aB_v+\tfrac{\Delta x}{\varepsilon}B_u\right)(B_u^2-B_v^2)\alpha +\left(\tfrac{\Delta x}{2\varepsilon}B_u+aB_v\right)^2>0.
		\end{split}
	\end{align}
	Looking at the quadratic polynmial in $\alpha$, let us define
	\begin{align*}
		\Delta_1:&=a^2\left(2aB_v+\tfrac{\Delta x}{\varepsilon}B_u\right)^2(B_u^2-B_v^2)^2
		-4a^2\left(B_u^2+B_v^2\right)^2\left(\tfrac{\Delta x}{2\varepsilon}B_u+aB_v\right)^2
		\\
		&=-4a^2B_u^2B_v^2\left(2aB_v+\tfrac{\Delta x}{\varepsilon}B_u\right)^2.
	\end{align*}
	Since $ a^2\left(B_u^2+B_v^2\right)^2>0$ and $\Delta_1<0$, the inequality \eqref{proof_lemma5_appendix2} holds for all $\alpha\in \mathbb{R}$.
	\\
	Secondly, we consider the first inequality in \eqref{proof_lemma5_appendix1}. It can be equivalently reformulated  as
	%
			$a\left(B_u^2+B_v^2\right)\alpha < aB_v+\tfrac{\Delta x}{2\varepsilon}B_u$.
	%
	This is equivalent to
	\begin{align}\label{proof_lemma5_appendix3}
		\alpha <\dfrac{2aB_v+\tfrac{\Delta x}{\varepsilon}B_u}{2a(B_u^2+B_v^2)}.
	\end{align}
	On the one side, the discrete strict dissipativity assumption implies that the right-hand side above is positive. On the other side, from the condition on $\alpha$ in \eqref{SAT1}, one has $\alpha<0$. Thus, the inequality ~\eqref{proof_lemma5_appendix3} holds.
	\\
	This ends the proof of Lemma~\ref{Lemma5_appendix}. 
\end{proof}

\begin{lm}\label{Lemma2_appendix}
	Let $B_u>0$, $B_v\in\RR$, $\Delta x\in (0,1]$, and $\varepsilon>0$.
	Let $(\alpha,\beta)$ be a SAT-parameter in the sense of Definition~\ref{Definition4}.
	Assume that the parameters satisfy the discrete strict dissipativity condition~\eqref{DSDC}. 
	Then, 
	\begin{align}\label{result_Lemma2_appendix}
		2B_v\beta+2B_u\alpha a-\tfrac{\Delta x}{\varepsilon}+\sqrt{\Delta}<0
	\end{align}
	where $\Delta$ is defined in \eqref{Definition_Delta}.
\end{lm}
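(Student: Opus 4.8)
The plan is to recast \eqref{result_Lemma2_appendix} as $\sqrt{\Delta}<\tfrac{\Delta x}{\varepsilon}-2B_v\beta-2B_u\alpha a$ and to check the two requirements for such an inequality: positivity of the right-hand side, and, after squaring, a single polynomial inequality.

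First I would verify that $\tfrac{\Delta x}{\varepsilon}-2B_v\beta-2B_u\alpha a>0$: since $\alpha<0$ by \eqref{SAT1} and $B_u,a>0$, the term $-2B_u\alpha a$ is positive; and $B_v\beta<0$ by Lemma~\ref{Lemma1_appendix} when $B_v\neq 0$, while $B_v\beta=0$ when $B_v=0$, so $-2B_v\beta\ge 0$; adding $\tfrac{\Delta x}{\varepsilon}>0$ gives positivity, so squaring is legitimate. Using the expression for $\Delta$ in \eqref{Definition_Delta} together with the elementary identity $(x+y-z)^2-(x-y-z)^2=4y(x-z)$ applied with $x=2B_v\beta$, $y=2B_u\alpha a$, $z=\tfrac{\Delta x}{\varepsilon}$, the squared inequality $\Delta<\bigl(\tfrac{\Delta x}{\varepsilon}-2B_v\beta-2B_u\alpha a\bigr)^2$ is equivalent to
\[
(B_u\beta+B_v\alpha a+a)^2<2B_u\alpha a\Bigl(2B_v\beta-\tfrac{\Delta x}{\varepsilon}\Bigr),
\]
which is the core estimate.

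To attack it I would complete the square on the left; a direct expansion yields the identity
\[
(B_u\beta+B_v\alpha a+a)^2-2B_u\alpha a\Bigl(2B_v\beta-\tfrac{\Delta x}{\varepsilon}\Bigr)
=(B_u\beta+a-aB_v\alpha)^2+4a^2B_v\alpha+2aB_u\alpha\tfrac{\Delta x}{\varepsilon},
\]
so the target becomes $(B_u\beta+a-aB_v\alpha)^2<-4a^2B_v\alpha-2aB_u\alpha\tfrac{\Delta x}{\varepsilon}$. The left-hand side here is exactly what \eqref{SAT2} controls: for $B_v\neq0$ it reads $(B_u\beta+a-aB_v\alpha)^2<4a^2|B_v\alpha|$, and for $B_v=0$ it forces $B_u\beta=-a$, so the left-hand side vanishes while the right-hand side $-2aB_u\alpha\tfrac{\Delta x}{\varepsilon}>0$ since $\alpha<0$, settling that case. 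For $B_v\neq0$ it then suffices to show $4a^2|B_v\alpha|\le -4a^2B_v\alpha-2aB_u\alpha\tfrac{\Delta x}{\varepsilon}$, i.e. $2a|B_v\alpha|+2aB_v\alpha+B_u\alpha\tfrac{\Delta x}{\varepsilon}\le 0$. When $B_v>0$, $\alpha<0$ gives $|B_v\alpha|=-B_v\alpha$, the first two terms cancel, and $B_u\alpha\tfrac{\Delta x}{\varepsilon}<0$ finishes it, uniformly in $(\Delta x,\varepsilon)$.

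The main obstacle is the case $B_v<0$: there $|B_v\alpha|=B_v\alpha$, so the reduced inequality becomes $\alpha\bigl(4aB_v+B_u\tfrac{\Delta x}{\varepsilon}\bigr)\le 0$, equivalently $\tfrac{\Delta x}{\varepsilon}\ge -4aB_u^{-1}B_v$, which is strictly stronger than \eqref{DSDC} (that only yields $\tfrac{\Delta x}{\varepsilon}>-2aB_u^{-1}B_v$). Hence bounding $(B_u\beta+a-aB_v\alpha)^2$ by its supremum $4a^2|B_v\alpha|$ is too lossy under \eqref{DSDC} alone, and instead I would retain that square and use the finer comparisons of Lemma~\ref{Lemma4_appendix} and Lemma~\ref{Lemma5_appendix} (which relate $\sqrt{2a|B_v\alpha|}$, $\sqrt{-(2aB_v+\tfrac{\Delta x}{\varepsilon}B_u)\alpha}$ and the square root appearing in $\sqrt{\Delta}$), exactly in the spirit of the proof of Proposition~\ref{Lemma5}. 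Since those two lemmas are themselves proved for $B_v<0$ only under $\Delta x\ge\delta_0\varepsilon$ with $\delta_0>-4aB_u^{-1}B_v$, I expect the $B_v<0$ branch of the conclusion to require that same restriction, in line with part~b) of Proposition~\ref{Lemma5}.
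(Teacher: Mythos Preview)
Your route is the paper's route: move $\sqrt{\Delta}$ to one side, check the other side is positive, square, and reduce to a quadratic condition on $B_u\beta$ to be compared with~\eqref{SAT2}. Your identity after completing the square reproduces exactly the paper's reformulation $(B_u\beta)^2+2a(1-B_v\alpha)B_u\beta+a^2(1+B_v\alpha)^2+2a\tfrac{\Delta x}{\varepsilon}B_u\alpha<0$, and your treatment of the cases $B_v>0$ and $B_v=0$ is complete. One difference worth noting: for the positivity of $\tfrac{\Delta x}{\varepsilon}-2B_v\beta-2B_u\alpha a$ you go straight through Lemma~\ref{Lemma1_appendix}, which is cleaner than the paper's detour via Lemmas~\ref{Lemma3_appendix} and~\ref{Lemma5_appendix} (those recast the first inequality as yet another bound on $B_u\beta$ before comparing with~\eqref{SAT2}).

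Your reading of the $B_v<0$ case is accurate, but one expectation should be corrected: Lemma~\ref{Lemma4_appendix} is not a ``finer comparison'' than your sup-bound --- it \emph{is} your sup-bound. Squaring~\eqref{result_Lemma4_Appendix1} gives precisely $2a|B_v\alpha|<-(2aB_v+\tfrac{\Delta x}{\varepsilon}B_u)\alpha$, i.e.\ $4a^2|B_v\alpha|\le -4a^2B_v\alpha-2aB_u\alpha\tfrac{\Delta x}{\varepsilon}$, and for $B_v<0$ that is equivalent to $\tfrac{\Delta x}{\varepsilon}\ge -4aB_u^{-1}B_v$. So the paper's own argument for this case carries the same extra restriction you found; there is no sharper inequality hiding in Lemmas~\ref{Lemma4_appendix}--\ref{Lemma5_appendix} that would rescue~\eqref{result_Lemma2_appendix} under~\eqref{DSDC} alone when $B_v<0$. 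In practice this is harmless: the only place the lemma is invoked is in Proposition~\ref{Lemma5}, whose part~b) already assumes $\Delta x\ge\delta_0\varepsilon$ with $\delta_0>-4aB_u^{-1}B_v$. Your final expectation is therefore correct; just drop the suggestion that an alternative route would do better, and state the $B_v<0$ branch under that stronger hypothesis.
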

\begin{proof}
	Under the definition of $\Delta$ in \eqref{Definition_Delta}, the required inequality \eqref{result_Lemma2_appendix} is equivalent to
	\begin{align}\label{proof_Lemma2_System}
		\begin{cases}
			2B_v\beta+2B_u\alpha a-\tfrac{\Delta x}{\varepsilon}<0
			\\
			\left(2B_v\beta-2B_u\alpha a-\tfrac{\Delta x}{\varepsilon}\right)^2+4\left(
			B_u\beta +B_v\alpha a+a
			\right)^2
			<\left(
			2B_v\beta+2B_u\alpha a-\tfrac{\Delta x}{\varepsilon}
			\right)^2.
		\end{cases}
	\end{align}
	The second inequality in \eqref{proof_Lemma2_System} can be reformulated as
	\begin{align*}
			(B_u\beta)^2+2a(1-B_v\alpha)B_u\beta+a^2(1+B_v\alpha)^2+2a\tfrac{\Delta x}{\varepsilon}B_u\alpha<0.
	\end{align*}
	On the other side, under the discrete strict dissipativity condition~\eqref{DSDC}
	and the condition on $\alpha$ in~\eqref{SAT1}, we have
	\begin{align}\label{proof_Lemma2_appendix2}
		-a(1-B_v\alpha)-\sqrt{-2a\left(2aB_v+\tfrac{\Delta x}{\varepsilon}B_u\right)\alpha}
		< \beta B_u <
		-a(1-B_v\alpha)+\sqrt{-2a\left(2aB_v+\tfrac{\Delta x}{\varepsilon}B_u\right)\alpha}.
	\end{align}
	\begin{itemize}
		\item[a)] Consider the case $B_v>0$. 
		From Lemma~\ref{Lemma3_appendix}, we have
		\begin{align*}
			-a(1-B_v\alpha)+\sqrt{-2a\left(2aB_v+\tfrac{\Delta x}{\varepsilon}B_u\right)\alpha}
			<-\dfrac{aB_u^2}{B_v}\alpha+\dfrac{B_u}{B_v}\dfrac{\Delta x}{2\varepsilon} .
		\end{align*}
		On the other hand, the first inequality in~\eqref{proof_Lemma2_System} also reads
		\begin{equation*}
			B_u\beta<-\dfrac{aB_u^2}{B_v}\alpha + \dfrac{B_u}{B_v}\dfrac{\Delta x}{2\varepsilon} .
		\end{equation*}
		Hence, the system~\eqref{proof_Lemma2_System} simply consists in the bounds~\eqref{proof_Lemma2_appendix2}.\\
		Now, from Lemma \ref{Lemma4_appendix}, we deduce the two following inequalities
		\begin{align*}
			&-a(1-B_v\alpha)-\sqrt{-2a\left(2aB_v+\tfrac{\Delta x}{\varepsilon}B_u\right)\alpha}
			<-a(1-B_v\alpha)-2a\sqrt{\vert B_v\alpha\vert}\\
			&\textrm{and}\\
			&-a(1-B_v\alpha)+2a\sqrt{\vert B_v\alpha\vert}
			<	-a(1-B_v\alpha)+\sqrt{-2a\left(2aB_v+\tfrac{\Delta x}{\varepsilon}B_u\right)\alpha}.
		\end{align*}
		Therefore, the system \eqref{proof_Lemma2_System} holds with the value of $\beta$ as in ~\eqref{SAT2}.
		
		\item[b)] Consider the case $B_v<0$. 
		From Lemma~\ref{Lemma5_appendix}, we have now
		\begin{align*}
			-\dfrac{aB_u^2}{B_v}\alpha +\dfrac{B_u}{B_v}\dfrac{\Delta x}{2\varepsilon}
			<-a(1-B_v\alpha)-\sqrt{-2a\left(2aB_v+\tfrac{\Delta x}{\varepsilon}B_u\right)\alpha}.
		\end{align*}
		Again, due to $B_v<0$, the first inequality in~\eqref{proof_Lemma2_System} also reads
		\begin{equation*}
			B_u\beta>-\dfrac{aB_u^2}{B_v}\alpha+ \dfrac{B_u}{B_v}\dfrac{\Delta x}{2\varepsilon}.
		\end{equation*}
		Hence, the system~\eqref{proof_Lemma2_System} simply consists in the bounds~\eqref{proof_Lemma2_appendix2}. 
		The same analysis as the previous one is available. 
		Therefore, the system \eqref{proof_Lemma2_System} holds under the condition on $\beta$ in~\eqref{SAT2}.
		\item[c)] Consider the case $B_v=0$. The inequality ~\eqref{proof_Lemma2_appendix2} can be rewritten as
		\begin{align*}
			-a-\sqrt{-2a\tfrac{\Delta x}{\varepsilon}B_u\alpha}<\beta B_u 
			< -a+\sqrt{-2a\tfrac{\Delta x}{\varepsilon}B_u\alpha}.
		\end{align*}
		From the condition on $\beta$ in \eqref{SAT2}, one has $\beta B_u=-a$. Thus, the above inequality holds.
		\\
		On the other hand, in the case $B_v=0$, the first inequality in \eqref{proof_Lemma2_System} also reads
		\begin{align}\label{proof_Lemma2_appendix2_t}
			2B_u\alpha a-\tfrac{\Delta x}{\varepsilon}<0.
		\end{align}
	From the condition on $\alpha$ in \eqref{SAT1}, one has $\alpha <0$ and thus \eqref{proof_Lemma2_appendix2_t} holds.
	\end{itemize}

	This ends the proof of Lemma \ref{Lemma2_appendix}.
\end{proof}

\begin{lm}\label{lemma6_appendix}
	Let $B_u>0$, $B_v>0$, $\Delta x\in (0,1]$, and $\varepsilon>0$.
	Let $(\alpha,\beta)$ be a SAT-parameter in the sense of Definition~\ref{Definition4}.
	Then
	\begin{align}\label{result_lemma6_appendix2}
		\begin{split}
			&-B_v\beta-B_u\alpha a-\sqrt{(B_v\beta-B_u\alpha a)^2+(B_u\beta +B_v\alpha a+a)^2}
			\\
			<&-B_v\beta-B_u\alpha a+\tfrac{\Delta x}{2\varepsilon}-\sqrt{\left(B_v\beta-B_u\alpha a-\tfrac{\Delta x}{2\varepsilon}\right)^2+\left(B_u\beta +B_v\alpha a+a\right)^2}
		\end{split}
	\end{align}
	and
	\begin{align}\label{result_lemma6_appendix1}
		0<-B_v\beta-B_u\alpha a-\sqrt{(B_v\beta-B_u\alpha a)^2+(B_u\beta +B_v\alpha a+a)^2}.
	\end{align}
\end{lm}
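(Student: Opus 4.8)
The plan is to change variables to $P := -B_v\beta$, $Q := -B_u\alpha a$, $R := B_u\beta + B_v\alpha a + a$, and $s_0 := \tfrac{\Delta x}{2\varepsilon}$, so that $B_v\beta - B_u\alpha a = Q - P$ and the two assertions become, respectively,
\begin{equation*}
	0 < (P+Q) - \sqrt{(Q-P)^2 + R^2}
	\quad\text{and}\quad
	(P+Q) - \sqrt{(Q-P)^2 + R^2} < (P+Q) + s_0 - \sqrt{(Q-P-s_0)^2 + R^2},
\end{equation*}
with $s_0 > 0$ (recall $\Delta x,\varepsilon>0$).

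First I would extract the sign information that Definition~\ref{Definition4} carries when $B_v>0$: condition \eqref{SAT1} forces $\alpha<0$, hence $Q>0$; and Lemma~\ref{Lemma1_appendix} gives $B_v\beta<0$, hence $P>0$ and $\beta<0$ (so $\alpha\beta>0$). The decisive step is to reinterpret the $\beta$-window \eqref{SAT2}. Using $|B_v\alpha|=-B_v\alpha$ (valid since $\alpha<0,B_v>0$) and $B_u\beta = R - a(1+B_v\alpha)$, the double inequality \eqref{SAT2} is equivalent to $|R - 2aB_v\alpha| < 2a\sqrt{-B_v\alpha}$; squaring the two nonnegative sides and collecting terms, this collapses to the single quadratic bound
\begin{equation*}
	R^2 < 4aB_uB_v\,\alpha\beta = 4PQ,
\end{equation*}
the last equality being merely $4PQ = 4(-B_v\beta)(-B_u\alpha a)$. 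This bound on $R$ drives both assertions.

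Indeed, the first assertion reads $P+Q > \sqrt{(Q-P)^2+R^2}$, and since $P+Q>0$ we may square: it is equivalent to $(P+Q)^2 - (Q-P)^2 > R^2$, that is $4PQ > R^2$, which we have just shown. For the second assertion, after cancelling $P+Q$ it amounts to $\phi(0) < \phi(s_0)$ for $\phi(s) := s - \sqrt{(Q-P-s)^2+R^2}$. Since $\phi'(s) = 1 + \frac{Q-P-s}{\sqrt{(Q-P-s)^2+R^2}} \ge 0$ (the fraction lying in $[-1,1]$), $\phi$ is nondecreasing, so $s_0>0$ yields $\phi(0)\le\phi(s_0)$, with strict inequality whenever $R\neq 0$; the only configuration producing equality is $R=0$ together with $Q\le P$, which one dispatches directly (and in any event only the non-strict bound $\le$ is used in the downstream application to Proposition~\ref{Lemma5}).

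The one genuine insight is the reduction of the $\beta$-window to $R^2<4PQ$; once that identity is recognised, \eqref{result_lemma6_appendix1} is a one-line squaring argument and \eqref{result_lemma6_appendix2} a one-line monotonicity argument. I expect the expansion and simplification of \eqref{SAT2} into $R^2<4PQ$ to be the most error-prone part, so I would carry it out keeping $R$, $\alpha$, $\beta$ as working variables throughout and verify the cancellation $4aB_v\alpha\,(R-a(1+B_v\alpha)) = 4aB_v\alpha\cdot B_u\beta$ explicitly. Note that \eqref{DSDC} is automatic for $B_u,B_v>0$, consistent with its absence from the hypotheses here.
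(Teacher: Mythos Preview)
Your argument is correct and takes a genuinely different, cleaner route than the paper. The paper proves \eqref{result_lemma6_appendix1} by squaring to the system $\{B_v\beta+B_u\alpha a<0,\ (B_v\beta-B_u\alpha a)^2+R^2<(B_v\beta+B_u\alpha a)^2\}$, expands the second inequality as a quadratic in $\beta$, computes its discriminant, and recovers precisely the window~\eqref{SAT2}; it then checks the first inequality by a separate chain of algebraic manipulations and a further discriminant computation. Your substitution $P=-B_v\beta$, $Q=-B_u\alpha a$, $R=B_u\beta+B_v\alpha a+a$ and the reduction of~\eqref{SAT2} to the single inequality $R^2<4PQ$ collapse all of this: the sign condition $P+Q>0$ is immediate from $P,Q>0$ (via Lemma~\ref{Lemma1_appendix} and $\alpha<0$), and \eqref{result_lemma6_appendix1} becomes the one-line identity $(P+Q)^2-(Q-P)^2=4PQ>R^2$. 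For \eqref{result_lemma6_appendix2}, the paper squares and simplifies to $-(B_v\beta-B_u\alpha a)<\sqrt{(B_v\beta-B_u\alpha a)^2+R^2}$; your monotonicity argument for $\phi(s)=s-\sqrt{(Q-P-s)^2+R^2}$ is the same statement in calculus clothing. What your approach buys is transparency: the r\^ole of the SAT window becomes a single quadratic bound, and no discriminant computations are needed.

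One shared caveat: the strict inequality in \eqref{result_lemma6_appendix2} can actually degenerate to equality when $R=0$ and $Q\le P$, and this configuration \emph{is} attainable within Definition~\ref{Definition4} for $B_v>0$ (take any $\alpha\in[-B_v/(B_u^2+B_v^2),0)$ and $\beta=-a(1+B_v\alpha)/B_u$). So your phrase ``which one dispatches directly'' is not quite right---the strict inequality genuinely fails there---but your fallback observation that only $\le$ is needed in Proposition~\ref{Lemma5} is correct and suffices. The paper's proof has exactly the same gap: it asserts $-(B_v\beta-B_u\alpha a)<\sqrt{(B_v\beta-B_u\alpha a)^2+R^2}$ ``holds for all $\alpha,\beta\in\mathbb{R}$'', which is false at $R=0$, $B_v\beta\le B_u\alpha a$.
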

\begin{proof}
	Firstly, we prove the inequality \eqref{result_lemma6_appendix2}. It is equivalent to
	\begin{align*}
		-(B_v\beta-B_u\alpha a)<\sqrt{(B_v\beta-B_u\alpha a)^2+(B_u\beta+B_v\alpha a+a)^2}.
	\end{align*}
	Since the above inequality holds for all $\alpha, \beta\in\mathbb{R}$, we can obtain the inequality \eqref{result_lemma6_appendix2}.
	\\
	Secondly, we prove that
	\begin{align}\label{proof_lemma6_appendix1}
		B_v\beta+B_u\alpha a+\sqrt{(B_v\beta-B_u\alpha a)^2+(B_u\beta +B_v\alpha a+a)^2}<0.
	\end{align}
	It is equivalent to the following system
	\begin{align}\label{proof_lemma6_appendix2}
		\begin{cases}
			B_v\beta+B_u\alpha a<0
			\\
			(B_v\beta-B_u\alpha a)^2+(B_u\beta +B_v\alpha a+a)^2<(B_v\beta+B_u\alpha a)^2
		\end{cases}
	\end{align}
	We first consider the second inequality in \eqref{proof_lemma6_appendix2}
	\begin{align*}
		(B_v\beta-B_u\alpha a)^2+(B_u\beta +B_v\alpha a+a)^2<(B_v\beta+B_u\alpha a)^2.
	\end{align*}
	It is equivalent to the following inequality
	\begin{align}\label{proof_lemma6_appendix3}
		B_u^2\beta^2+2aB_u(1-B_v\alpha)\beta +a^2(1+B_v\alpha)^2<0.
	\end{align}
	Let us define
	\begin{align*}
		\Delta_2:&=a^2B_u^2(1-B_v\alpha)^2-a^2B_u^2(1+B_v\alpha)^2
		=-4a^2B_u^2B_v\alpha.
	\end{align*}
	From the condition on $\alpha$ in~\eqref{SAT1}, one has $\alpha<0$ and therefore
	$
	\Delta_2>0.
	$

	Consequently, the inequality \eqref{proof_lemma6_appendix3} is equivalent to
	\begin{align*}
		B_u\beta\in \left(-a(1-B_v\alpha)-2a\sqrt{\vert B_v\alpha\vert},
		-a(1-B_v\alpha)+2a\sqrt{\vert B_v\alpha\vert}
		\right).
	\end{align*}
	Therefore, the second inequality in \eqref{proof_lemma6_appendix2} holds with the value of $\beta$ as in~\eqref{SAT2}.
	\\
	We now consider the first inequality in \eqref{proof_lemma6_appendix2}
	$		B_v\beta + B_u\alpha a<0.$
	It is equivalent to
	\begin{align}\label{proof_lemma6_appendix4}
		\beta<-\dfrac{aB_u}{B_v}\alpha.
	\end{align}
	We now check that the value of $\beta$ as in~\eqref{SAT2} satisfies \eqref{proof_lemma6_appendix4}.
	Since $B_u>0$ and $B_v>0$, we need to prove
	\begin{align*}
		&-a(1-B_v\alpha)+2a\sqrt{\vert B_v\alpha\vert} \leq -\dfrac{aB_u^2}{B_v}\alpha
		\\
		\Leftrightarrow\,
		& 2aB_v\sqrt{\vert B_v\alpha\vert} \leq -aB_u^2\alpha + aB_v(1-B_v\alpha).
	\end{align*}
	It is equivalent to the following system
	\begin{align}\label{proof_lemma6_appendix5}
		\begin{cases}
			-aB_u^2\alpha + aB_v(1-B_v\alpha)>0
			\\
			4a^2B_v^2\vert B_v\alpha\vert \leq \left(-aB_u^2\alpha + aB_v(1-B_v\alpha)\right)^2
		\end{cases}
	\end{align}
	Under the condition on $\alpha$ in Definition~\ref{Definition4}, we have $\alpha<0$. Hence $\vert B_v\alpha\vert =-B_v\alpha$. Therefore, the second inequality in the system \eqref{proof_lemma6_appendix5} becomes
	\begin{align}\label{proof_lemma6_appendix6}
		\begin{split}
			-4a^2B_v^2 B_v\alpha \leq \left(-aB_u^2\alpha + aB_v(1-B_v\alpha)\right)^2
			\Leftrightarrow\,
			(B_u^2+B_v^2)^2\alpha^2-2B_v(B_u^2-B_v^2)\alpha +B_v^2>0.
		\end{split}
	\end{align}
	Let us define
	\begin{align}\label{Definition_delta3}
		\begin{split}
			\Delta_3:=B_v^2(B_u^2-B_v^2)^2-B_v^2(B_u^2+B_v^2)^2
			=-4B_u^2B_v^4<0.
		\end{split}
	\end{align}
	Since $(B_u^2+B_v^2)^2>0$ and $\Delta_3<0$, the inequality \eqref{proof_lemma6_appendix6} holds for all $\alpha\in\mathbb{R}$. Then, we consider the first inequality in the system \eqref{proof_lemma6_appendix5} 
	\begin{align}\label{proof_lemma6_appendix7}
		-aB_u^2\alpha + aB_v(1-B_v\alpha)>0
		\Leftrightarrow\,\alpha <\dfrac{B_v}{B_u^2+B_v^2}.
	\end{align}
	Since $B_v>0$ and the definition of $\alpha$ in Definition~\ref{Definition4}, the inequality \eqref{proof_lemma6_appendix7} holds.
	\\
	This ends the proof of Lemma \ref{lemma6_appendix}.\hspace{10cm}
\end{proof}
\begin{lm}\label{lemma7_appendix}
	Let $B_u>0$, $B_v<0$, $\Delta x\in (0,1]$, and $\varepsilon>0$.
	Let $(\alpha,\beta)$ be a SAT-parameter in the sense of Definition~\ref{Definition4}.
	Assume that the parameters satisfy the discrete strict dissipativity condition~\eqref{DSDC}. %
	Then
	\begin{align}\label{relsult_lemma7_appendix1}
		\begin{split}
			&-B_v\beta -B_u\alpha a +\tfrac{\delta_0}{2}-\sqrt{\bigl(B_v\beta -B_u\alpha a-\tfrac{\delta_0}{2}\bigr)^2+(B_u\beta +B_v\alpha a+a)^2}
			\\
			<&
			-B_v\beta-B_u\alpha a+\tfrac{\Delta x}{2\varepsilon}-\sqrt{\left(B_v\beta-B_u\alpha a-\tfrac{\Delta x}{2\varepsilon}\right)^2+(B_u\beta +B_v\alpha a+a)^2}
		\end{split}
	\end{align}
	and
	\begin{align}\label{relsult_lemma7_appendix2}
		0<-	B_v\beta -B_u\alpha a +\tfrac{\delta_0}{2}-\sqrt{\bigl(B_v\beta -B_u\alpha a-\tfrac{\delta_0}{2}\bigr)^2+(B_u\beta +B_v\alpha a+a)^2},
	\end{align}
	with $\Delta x\geq \delta_0\varepsilon$ and $\delta_0>-4aB_u^{-1}B_v$.
\end{lm}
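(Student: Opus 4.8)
The two claims \eqref{relsult_lemma7_appendix1} and \eqref{relsult_lemma7_appendix2} have the same shape as those in Lemma~\ref{lemma6_appendix}, so I would use the same two tools: a monotonicity argument for \eqref{relsult_lemma7_appendix1} and an explicit discriminant computation for \eqref{relsult_lemma7_appendix2}. Throughout, abbreviate $P:=B_v\beta+B_u\alpha a$, $Q:=B_v\beta-B_u\alpha a$ and $R:=B_u\beta+B_v\alpha a+a$, so that the right-hand side of \eqref{relsult_lemma7_appendix1} reads $-P+s-\sqrt{(Q-s)^2+R^2}$ with $s=\tfrac{\Delta x}{2\varepsilon}$, and \eqref{relsult_lemma7_appendix2} asserts positivity of this same expression at $s=\tfrac{\delta_0}{2}$. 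First I would record the elementary sign facts that the SAT-parameter provides when $B_v<0$: from \eqref{SAT1}, $\alpha<(3+2\sqrt2)B_v^{-1}<0$, hence $B_v\alpha>3+2\sqrt2>1$ and $B_u\alpha a<0$; from Lemma~\ref{Lemma1_appendix}, $B_v\beta<0$, so $-P>0$; and $\delta_0>-4aB_u^{-1}B_v>0$. Moreover, inserting the lower bound of \eqref{SAT2} into $R$ gives $R>2aB_v\alpha-2a\sqrt{|B_v\alpha|}=2a\sqrt{B_v\alpha}\,(\sqrt{B_v\alpha}-1)>0$ because $B_v\alpha>1$. These are the only structural inputs used.

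For \eqref{relsult_lemma7_appendix1}, I would set $\phi(s):=s-\sqrt{(Q-s)^2+R^2}$ on $[0,\infty)$, so $\phi'(s)=1+\dfrac{Q-s}{\sqrt{(Q-s)^2+R^2}}$. Since $R>0$ we have $|Q-s|<\sqrt{(Q-s)^2+R^2}$, hence $\phi'(s)>0$ and $\phi$ is (strictly) increasing. As $\Delta x\ge\delta_0\varepsilon$ gives $\tfrac{\Delta x}{2\varepsilon}\ge\tfrac{\delta_0}{2}$, monotonicity yields $\phi(\tfrac{\delta_0}{2})\le\phi(\tfrac{\Delta x}{2\varepsilon})$ (strict when $\Delta x>\delta_0\varepsilon$); adding the common constant $-P$ to both sides is exactly \eqref{relsult_lemma7_appendix1}. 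Equivalently, one may rationalize the difference of the two square roots, as in the proof of Lemma~\ref{lemma6_appendix}, and use $s-Q\le\sqrt{(Q-s)^2+R^2}$.

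For \eqref{relsult_lemma7_appendix2}, since $-P+\tfrac{\delta_0}{2}>0$ by the sign facts above, the claim is equivalent after squaring to $(-P+\tfrac{\delta_0}{2})^2>(Q-\tfrac{\delta_0}{2})^2+R^2$, i.e.\ (using $P^2-Q^2=(P-Q)(P+Q)=4aB_uB_v\alpha\beta$ and $Q-P=-2aB_u\alpha$) to $4aB_uB_v\alpha\beta-2aB_u\alpha\delta_0>R^2$. Writing $y:=B_u\beta$ and expanding $R^2=(y+a(1+B_v\alpha))^2$, this becomes the quadratic inequality $y^2+2a(1-B_v\alpha)y+a^2(1+B_v\alpha)^2+2aB_u\alpha\delta_0<0$. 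Its reduced discriminant is $a^2(1-B_v\alpha)^2-a^2(1+B_v\alpha)^2-2aB_u\alpha\delta_0=-2a\alpha(2aB_v+B_u\delta_0)$, which is positive because $\alpha<0$ and $2aB_v+B_u\delta_0>2aB_v+B_u(-4aB_u^{-1}B_v)=-2aB_v>0$. Hence the quadratic is negative precisely when $|B_u\beta+a(1-B_v\alpha)|<\sqrt{-2a\alpha(2aB_v+B_u\delta_0)}$, and since \eqref{SAT2} already gives $|B_u\beta+a(1-B_v\alpha)|<2a\sqrt{|B_v\alpha|}$, it suffices to check $4a^2|B_v\alpha|\le-2a\alpha(2aB_v+B_u\delta_0)$; dividing by $-2a\alpha>0$ and using $|B_v\alpha|=B_v\alpha$, this reads $-2aB_v\le 2aB_v+B_u\delta_0$, i.e.\ $\delta_0\ge-4aB_u^{-1}B_v$, precisely the standing hypothesis. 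This closes \eqref{relsult_lemma7_appendix2}.

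The step I expect to be most delicate is this last comparison of admissible windows for $B_u\beta$: keeping signs straight when $B_v<0$ and $\alpha<0$ (so that $B_v\alpha>0$, $|B_v\alpha|=B_v\alpha$, $-\alpha>0$), and recognizing that the inclusion of the SAT window \eqref{SAT2} inside the negativity interval of the quadratic collapses \emph{exactly} to the threshold $\delta_0>-4aB_u^{-1}B_v$. The auxiliary sign facts — $R>0$ obtained by combining \eqref{SAT1} and \eqref{SAT2}, and $B_v\beta<0$ from Lemma~\ref{Lemma1_appendix} — are routine but genuinely needed, the former for the monotonicity step and the latter to justify squaring in \eqref{relsult_lemma7_appendix2}.
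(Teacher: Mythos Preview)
Your proof is correct and follows the same overall strategy as the paper---reduce \eqref{relsult_lemma7_appendix2} to the quadratic inequality $y^2+2a(1-B_v\alpha)y+a^2(1+B_v\alpha)^2+2aB_u\alpha\delta_0<0$ in $y=B_u\beta$, compute the discriminant, and observe that the SAT window \eqref{SAT2} sits inside the negativity interval precisely when $\delta_0>-4aB_u^{-1}B_v$. The differences are in packaging. For \eqref{relsult_lemma7_appendix1} you argue via monotonicity of $\phi(s)=s-\sqrt{(Q-s)^2+R^2}$, for which you need and supply an explicit proof that $R>0$; the paper instead squares directly and asserts that the resulting inequality ``holds for all $\alpha,\beta$'', which is in fact only true when $R\neq0$, so your version is actually the more careful one here. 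For the sign condition $-P+\tfrac{\delta_0}{2}>0$ needed before squaring in \eqref{relsult_lemma7_appendix2}, you invoke Lemma~\ref{Lemma1_appendix} ($B_v\beta<0$) together with $B_u\alpha a<0$ and $\delta_0>0$; the paper instead re-derives this through a lengthy separate computation (its system \eqref{proof_lemma7_appendix13}--\eqref{proof_lemma7_appendix26}). Your route is shorter and avoids a sign typo present in the paper's inequality \eqref{proof_lemma7_appendix7}. One shared loose end: both you and the paper implicitly treat $\Delta x>\delta_0\varepsilon$ as strict (the paper writes ``$\Delta x/\varepsilon-\delta_0>0$'', you note ``strict when $\Delta x>\delta_0\varepsilon$''); at equality both sides of \eqref{relsult_lemma7_appendix1} coincide, which is harmless for the downstream application in Proposition~\ref{Lemma5}.
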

\begin{proof}
	Firstly, we prove the inequality \eqref{relsult_lemma7_appendix1}. It is equivalent to the following inequality
	\begin{align}\label{proof_lemma7_appendix1}
		\begin{split}
			&\sqrt{\left(2B_v\beta-2B_u\alpha a-\tfrac{\Delta x}{\varepsilon}\right)^2+4(B_u\beta +B_v\alpha a+a)^2}
			\\
			<\,&\tfrac{\Delta x}{\varepsilon}-\delta_0
			+2\sqrt{\bigl(B_v\beta -B_u\alpha a-\tfrac{\delta_0}{2}\bigr)^2+(B_u\beta +B_v\alpha a+a)^2}.
		\end{split}
	\end{align}
	Since $\Delta x/\varepsilon -\delta_0>0$, the inequality \eqref{proof_lemma7_appendix1}  becomes
	\begin{align*}
		-2(B_v\beta-B_u\alpha a)+\delta_0
		<2\sqrt{\bigl(B_v\beta -B_u\alpha a-\tfrac{\delta_0}{2}\bigr)^2+(B_u\beta +B_v\alpha a+a)^2}.
	\end{align*}
	It holds for all $\alpha, \beta\in\mathbb{R}$. Secondly, we prove that
	\begin{align}\label{proof_lemma7_appendix2}
		B_v\beta +B_u\alpha a -\dfrac{\delta_0}{2}+\sqrt{\bigl(B_v\beta -B_u\alpha a-\tfrac{\delta_0}{2}\bigr)^2+(B_u\beta +B_v\alpha a+a)^2}
		<0.
	\end{align}
	It is equivalent to the following system
	\begin{align}\label{proof_lemma7_appendix3}
		\begin{cases}
			B_v\beta +B_u\alpha a -\tfrac{\delta_0}{2}<0
			\\
			\bigl(B_v\beta -B_u\alpha a-\tfrac{\delta_0}{2}\bigr)^2+(B_u\beta +B_v\alpha a+a)^2
			<\bigl(B_v\beta +B_u\alpha a -\tfrac{\delta_0}{2}\bigr)^2
		\end{cases}
	\end{align}
	We now look at the second inequality in \eqref{proof_lemma7_appendix3}
	\begin{align}\label{proof_lemma7_appendix4}
		\begin{split}
			&	\bigl(B_v\beta -B_u\alpha a-\tfrac{\delta_0}{2}\bigr)^2+(B_u\beta +B_v\alpha a+a)^2
			<\bigl(B_v\beta +B_u\alpha a -\tfrac{\delta_0}{2}\bigr)^2
			\\
			\Leftrightarrow\,
			& B_u^2\beta^2+2aB_u(1-B_v\alpha)\beta +a^2(1+B_v\alpha)^2+2\delta_0 B_u\alpha a<0.
		\end{split}
	\end{align}
	Let us define
	\begin{align*}
		\Delta_4:&=a^2B_u^2(1-B_v\alpha)^2-B_u^2\left[
		a^2(1+B_v\alpha)^2+2\delta_0 B_u\alpha a
		\right]
		\\
		&=-2aB_u^2(2aB_v+\delta_0 B_u)\alpha.
	\end{align*}
	By using the definition of $\alpha$ in Definition~\ref{Definition4}, we have $\alpha<0$.
	On the other hand, we assume that $\delta_0>-4aB_u^{-1}B_v$. Hence, we get $\Delta_4>0$. Therefore, the inequality \eqref{proof_lemma7_appendix4} holds if and only if
	\begin{align}\label{proof_lemma7_appendix6}
		\beta B_u\in\left(
		-a(1-B_v\alpha)-\sqrt{-2a(2aB_v+\delta_0 B_u)\alpha},
		-a(1-B_v\alpha)+\sqrt{-2a(2aB_v+\delta_0 B_u)\alpha}
		\right)
	\end{align}
	Now, we check that the value of $\beta$ in Definition~\ref{Definition4} satisfies \eqref{proof_lemma7_appendix6}. Firstly, we prove
	\begin{align}\label{proof_lemma7_appendix7}
		\begin{split}
			&	-a(1-B_v\alpha)+\sqrt{-2a(2aB_v+\delta_0 B_u)\alpha}<
			-a(1-B_v\alpha)-2a\sqrt{\vert B_v\alpha\vert}
			\\
			\Leftrightarrow\,
			&4a^2\vert B_v\alpha\vert <-2a(2aB_v+\delta_0B_u)\alpha.
		\end{split}
	\end{align}
	Since $B_v<0$ and the definition of $\alpha$ in Definition~\ref{Definition4}, we get $\vert B_v\alpha\vert =B_v\alpha$. So, the inequality \eqref{proof_lemma7_appendix7} becomes
	\begin{align*}
		& 4a^2 B_v\alpha <-2a(2aB_v+\delta_0B_u)\alpha
		\\
		\Leftrightarrow\,
		& \delta_0>-4aB_u^{-1}B_v.
	\end{align*}
	Therefore, the inequality \eqref{proof_lemma7_appendix7} holds.
	\\
	Secondly, we prove
	\begin{align}\label{proof_lemma7_appendix8}
		\begin{split}
			&-a(1-B_v\alpha)+2a\sqrt{\vert B_v\alpha\vert}
			<-a(1-B_v\alpha)+\sqrt{-2a(2aB_v+\delta_0 B_u)\alpha}
			\\
			\Leftrightarrow\,
			&4a^2\vert B_v\alpha\vert <-2a(2aB_v+\delta_0B_u)\alpha.
		\end{split}
	\end{align}
	Since $B_v<0$ and the property~\eqref{SAT1}, we get $\vert B_v\alpha\vert =B_v\alpha$. So, the inequality \eqref{proof_lemma7_appendix8} becomes
	\begin{align*}
		& 4a^2B_v\alpha <-2a(2aB_v+\delta_0B_u)\alpha
		\\
		\Leftrightarrow\,
		& \delta_0>-4aB_u^{-1}B_v.
	\end{align*}
	Therefore, the inequality \eqref{proof_lemma7_appendix8} holds.
	As a consequence, we can obtain the inequality \eqref{proof_lemma7_appendix4}.
	\\
	We now consider the first inequality in \eqref{proof_lemma7_appendix3}
	\begin{align*}
		B_v\beta +B_u\alpha a-\tfrac{\delta_0}{2}<0.
	\end{align*}
	It is equivalent to the following system
	\begin{align}\label{proof_lemma7_appendix13}
		\begin{cases}
			\beta <-\dfrac{aB_u}{B_v}\alpha +\tfrac{\delta_0}{2}\times \dfrac{1}{B_v}, &\text{if }B_v>0
			\\
			\beta >-\dfrac{aB_u}{B_v}\alpha +\tfrac{\delta_0}{2}\times \dfrac{1}{B_v}, &\text{if }B_v<0
		\end{cases}
	\end{align}
	We now check that the value of $\beta$ in Definition~\ref{Definition4} satisfies \eqref{proof_lemma7_appendix13}.
	Since $B_u>0$ and $B_v<0$ then we need to prove
	\begin{align}\label{proof_lemma7_appendix21}
		-\dfrac{aB_u}{B_v}\alpha+\tfrac{\delta_0}{2}\times\dfrac{1}{B_v}
		\leq  -\dfrac{aB_u(1-B_v\alpha)+2a\sqrt{B_u^2\vert B_v\alpha\vert}}{B_u^2}.
	\end{align}
	Since $B_v<0$, the inequality \eqref{proof_lemma7_appendix21} becomes
	\begin{align*}
		-2aB_v\sqrt{B_u^2\vert B_v\alpha\vert} \leq -aB_u^3\alpha+\tfrac{\delta_0}{2}B_u^2+aB_uB_v(1-B_v\alpha).
	\end{align*}
	It is equivalent to the following system
	\begin{align}\label{proof_lemma7_appendix22}
		\begin{cases}
			-aB_u^3\alpha+\tfrac{\delta_0}{2}B_u^2+aB_uB_v(1-B_v\alpha)>0
			\\
			4a^2B_u^2B_v^2\vert B_v\alpha\vert \leq  \left[-aB_u^3\alpha+\tfrac{\delta_0}{2}B_u^2+aB_uB_v(1-B_v\alpha)\right]^2
		\end{cases}
	\end{align}
	We first look at the second inequality in \eqref{proof_lemma7_appendix22}
	\begin{align}\label{proof_lemma7_appendix23}
		4a^2B_u^2B_v^2\vert B_v\alpha\vert \leq \left[-aB_u^3\alpha+\tfrac{\delta_0}{2}B_u^2+aB_uB_v(1-B_v\alpha)\right]^2.
	\end{align}
	Since $B_v<0$ and $\alpha$ satisfies~\eqref{SAT1}, the inequality~\eqref{proof_lemma7_appendix23} can be reformulated as
	\begin{align}\label{proof_lemma7_appendix25}
		\begin{split}
			&4a^2B_u^2B_v^3\alpha \leq \left[-aB_u^3\alpha+\tfrac{\delta_0}{2}B_u^2+aB_uB_v(1-B_v\alpha)\right]^2
			\\
			\Leftrightarrow\,
			& a^2(B_u^2+B_v^2)^2\alpha^2-\left(
			6a^2B_v^3 + a\delta_0B_u^3 + 2a^2B_u^2B_v + a\delta_0 B_uB_v^2
			\right)\alpha +\left(\tfrac{\delta_0}{2}B_u+a B_v\right)^2 \geq 0.
		\end{split}
	\end{align}
	On the other hand, since $B_v<0$ and under the assumption $\delta_0>-4aB_u^{-1}B_v$, the inequality~\eqref{proof_lemma7_appendix25} holds for all $\alpha\in\mathbb{R}$.
	\\
	Then, we consider  the first inequality in \eqref{proof_lemma7_appendix22}
	\begin{align}\label{proof_lemma7_appendix26}
		-aB_u^3\alpha+\tfrac{\delta_0}{2}B_u^2+aB_uB_v(1-B_v\alpha)>0
	\end{align}
	Since $B_u>0$, the inequality \eqref{proof_lemma7_appendix26} can be represented as
	\begin{align*}
		\alpha<\dfrac{\tfrac{\delta_0}{2}B_u+aB_v}{a(B_u^2+B_v^2)}.
	\end{align*}
	Under the assumption $\delta_0>-4aB_u^{-1}B_v$, we get
	\begin{align*}
		\tfrac{\delta_0}{2}B_u+aB_v>0.
	\end{align*}
	Therefore, the inequality \eqref{proof_lemma7_appendix26} holds under the condition on $\alpha$ in \eqref{SAT1}.
	As a consequence, we get the first inequality in \eqref{proof_lemma7_appendix3}. 
	This ends the proof of Lemma \ref{lemma7_appendix}.\hspace{3cm}
\end{proof}


\section*{Acknowledgement}
Research of Nguyen Thi Hoai Thuong was partially supported by  Vietnam National University, Ho Chi Minh City (VNU-HCM) under grant number C2022-18-47.\\
Research of Benjamin Boutin was partially supported by ANR project HEAD, ANR-24-CE40-3260 and by Centre Henri Lebesgue, programme ANR-11-LABX-0020-01.


\bibliographystyle{abbrv}
\bibliography{NguyenBoutinBiblio}

\end{document}